\newtheorem{theorem}{Theorem}[section]
\newtheorem{proposition}[theorem]{Proposition}
\theoremstyle{definition}
\newtheorem{definition}[theorem]{Definition}
\newtheorem{example}[theorem]{Example}
\def\emm#1,{{\em #1}}
\newcommand{\beq}{\begin{equation}}
\newcommand{\eeq}{\end{equation}}
\newcommand{\al}{\alpha}
\newcommand{\be}{\beta}
\newcommand{\la}{\lambda}
\newcommand{\si}{\sigma}
\newcommand{\eps}{\epsilon}
\newcommand{\cA}{{\cal A}}
\newcommand{\cC}{{\cal C}}
\newcommand{\cE}{{\cal E}}
\newcommand{\cL}{{\cal L}}
\newcommand{\cM}{{\cal M}}
\newcommand{\cW}{{\cal W}}
\newcommand{\cS}{{\cal S}}
\newcommand{\bE}{\overline{\cal E}}
\newcommand{\bD}{\overline{\cal D}}
\newcommand{\bF}{\overline{\cal F}}
\newcommand{\bv}{\bar v}
\newcommand{\bu}{\bar u}
\newcommand{\gf}{generating function}
\newcommand{\gfs}{generating functions}
\newcommand{\fsa}{finite state automaton}
\newcommand{\fps}{formal power series}
\newcommand{\pol}{polyomino}
\newcommand{\ragf}{rational [algebraic] GF}
\newcommand{\Blue}{}
\newcommand{\ns}{\mathbb{N}}
\newcommand{\ps}{\mathbb{P}}
\newcommand{\zs}{\mathbb{Z}}
\newcommand{\qs}{\mathbb{Q}}
\newcommand{\rs}{\mathbb{R}}
\newcommand{\cs}{\mathbb{C}}
\title[Rational and algebraic series in combinatorial
  enumeration]{Rational and  algebraic series\\
  in combinatorial enumeration} 
\author[Mireille Bousquet-M\'elou]{Mireille Bousquet-M\'elou
%\thanks{The     authors are grateful to 
%the Max Planck Institute (Bonn) for hospitality during the writing of
%this paper.}
}
\begin{document}

\begin{abstract}
Let $\cA$ be a class of objects, equipped with an integer size such
that for all $n$ the number $a_n$ of objects of size $n$ is finite. We
are interested in the case where the \gf\ $\sum_n a_n t^n$ is rational, or
more generally
algebraic. This property has a practical interest, since one can
usually say a
lot on the numbers $a_n$, but also a combinatorial one: the
rational or algebraic nature of the \gf\ suggests that the objects
have a (possibly hidden) structure, similar to the \emm linear structure,
of words  in the rational case, and to
the  \emm branching structure, of trees in the algebraic case. We describe
and illustrate this combinatorial intuition, and discuss its validity.
While it seems to be satisfactory in the rational case, it is
probably incomplete in the algebraic one. We conclude with open questions.
\end{abstract}

\begin{classification}
Primary 05A15; Secondary 68Q45.
\end{classification}

\begin{keywords}
Enumerative combinatorics, generating functions, rational and
algebraic power series, formal languages.
\end{keywords}

\maketitle

\section{Introduction}
\label{section-intro}
The general topic of this paper is the enumeration of discrete
objects (words, trees, graphs...) and more specifically the
\emm rational,\/ or \emm algebraic, nature of the associated \gfs.
Let $\cA$ be a class of discrete objects equipped with a size:
$$
\begin{array}{rcl}
size :  \cA &\rightarrow &\ns \\
A& \mapsto& |A|.
\end{array}
$$
Assume that for all $n$, the number $a_n$ of objects of size $n$ is
finite. 
%$$\cA_n: =\{A \in \cA : |A|=n\}$$is finite. Let $a_n= \left|\cA_n\right|$.
The \emm {\gf}  of the objects of, $\cA$, \emm counted by their
size,, is the following \fps\ in the indeterminate $t$:
\beq\label{GF-def}
A(t) := \sum_{n\ge 0} a_n t^n= \sum_{A\in \cA} t^{|A|}.
 \eeq
To take a very simple example, if $\cA$ is the set of words on the
alphabet $\{a,b\}$ and the size of a word is its number of letters,
then the \gf\ is $\sum_{n\ge 0} 2^n t^n=1/(1-2t)$. 

Generating functions provide both
a tool for solving counting problems, and a concise way to encode
their solution. Ideally, one would probably dream of finding
  a closed formula for the numbers $a_n$. But the world of
  mathematical objects  would be
  extremely poor if this was always possible. In practise, one is usually
 happy with an expression of the \gf \ $A(t)$, or even
  with a recurrence relation defining the sequence $a_n$, or  a
functional equation defining $A(t)$.   

Enumerative problems 
arise spontaneously in various fields of mathematics, computer
science, and physics. Among the most generous suppliers of such
problems, let us cite discrete probability theory, 
the analysis of the complexity of algorithms~\cite{knuth,fla-sedg1}, and the
discrete models  of statistical 
physics, like the famous Ising model~\cite{baxter-book}.
% When dealing in problems with a  computer science of physics
% flavour, it is often sufficient, and more informative, to obtain the
% asymptotic behaviour of the numbers $a_n$ rather than an exact
% formula.
More generally, counting the  objects that occur in one's work
seems to answer a natural curiosity. It helps to understand the
objects, for instance to appreciate how restrictive are the conditions
that define 
them. It also forces us to get some understanding of the \emm
structure, of the objects: an enumerative result never comes for free,
but only after one has elucidated, at least partly, what the objects
really are.

We focus in this survey on objects having a rational, or, more
generally,  algebraic \gf. Rational and algebraic \fps\ are
well-behaved objects with many interesting properties. This is one of
the reasons why several classical textbooks on enumeration devote one
or several chapters to these
series~\cite{fla-sedg2,stanley-vol1,stanley-vol2}. These chapters give
typical examples of objects with a rational [resp. algebraic] \gf\
(GF). After a while, the collection of these examples builds up a
general picture: one starts thinking that yes, all these objects have
something in common in their structure. At the same time arises the
following question: do all objects with a \ragf\  look like that? In
other words, what does it mean, what does it suggest about the
objects when they are counted by a \ragf\ ?

This question is at the heart of this survey. For each of the two
classes of series under consideration, we first present a general
family of enumerative problems whose solution falls invariably in this
class. These problems are simple to describe: the first one deals
with walks in a directed graph, the other with plane
trees. Interestingly, these families of objects admit alternative
descriptions in language theoretic terms: they correspond to \emm regular
languages,, and to \emm unambiguous context-free languages,,
respectively.  The words of these languages have a clear recursive
structure, which explains directly the rationality [algebraicity] of
their GF. 

The series counting words of a regular [unambiguous context-free]
language are called $\ns$\emm-rational, [$\ns$\emm-algebraic,]. It is worth
noting that a rational [algebraic] series with non-negative
coefficients is not necessarily $\ns$-rational
[$\ns$-algebraic]. Since we want to appreciate whether our two generic
classes of objects are good representatives of objects with a \ragf,
the first question to address is the following: do we always fall in
the class of $\ns$-rational [$\ns$-algebraic] series when we count
objects with a \ragf? More informally, do these objects exhibit a
structure similar to the structure of regular [context-free]
languages?  Is such a  structure usually clearly visible? That is to
say, is it easy to feel, to predict rationality [algebraicity]?

We shall see that the answer to all these questions tends to be \emm
yes, in the rational case (with a few warnings...) but is probably
\emm no, in the algebraic case. In particular, the rich world of \emm
planar maps, (planar graphs embedded in the sphere) abounds in
candidates for non-$\ns$-algebraicity. The algebraicity of the
associated GFs has been known for more than 40 years (at least for
some families of maps), but it is only in the past 10 years that a
general combinatorial explanation of this algebraicity has
emerged. Moreover, the underlying constructions are more general that
those allowed in context-free descriptions, as they involve taking
\emm complements,.

Each of the main two sections ends with a list of questions. In
particular, we present at the end of Section~\ref{sec:alg} several
counting problems that are simple to state and have an algebraic GF,
but for reasons that remain mysterious. 

The paper is sometimes written in an informal style. We hope
that this will not stop the reader. We have tried to give precise
references where he/she will find more details and more
material on the topics we discuss. In particular, %let us repeat that
this survey borrows a lot to two books that we warmly recommend:
Stanley's \emm Enumerative
Combinatorics,~\cite{stanley-vol1,stanley-vol2}, and Flajolet \&
Sedgewick's \emm Analytic Combinatorics,~\cite{fla-sedg2}.

\medskip 
\noindent {\bf Notation and definitions.}
 Given a (commutative) ring $R$, we denote by $R[t]$ the
ring of polynomials in $t$ having coefficients in $R$. 
A \emm Laurent series, in $t$ 
%with coefficients in $R$ 
is a series of the form $A(t)=\sum_{n\ge n_0} a_n
t^n$, with $n_0\in \zs$ and $a_n\in R$ for all $n$. If $n_0\ge0$, we
say that $A(t)$ is a \emm \fps,. The coefficient of $t^n$ is
denoted $a_n:=[t^n]A(t)$. The set of Laurent series forms a ring, and even
a field if $R$ is a field. The \emm quasi-inverse, of $A(t)$ is the
series $A^*(t):=1/(1-A(t))$. If $A(t)$ is a \fps\ with constant term 0, then
$A^*(t)$ is a \fps\ too.

In most occasions, the series we consider are GFs of the form~\eqref{GF-def}
 and thus have  rational
 coefficients. However, we sometimes consider refined enumeration
 problems, in which every object $A$ is \emm weighted,, usually by a
 monomial $w(A)$ in some additional indeterminates $x_1, \ldots , x_m$. The
 weighted GF is then
$\sum_{A\in \cA} w(A) t^{|A|}$, so that the coefficient ring is
 $\qs[x_1, \ldots , x_m]$ rather than $\qs$. 

We denote $\llbracket k\rrbracket=\{1,2, \ldots , k\}$. We use the standard notation $\ns, \zs, \qs$, and  $\ps:=\{1,2,3, \ldots \}$.

%%%%%%%%%%%%%%%%%%%%%%%%%%%%%%%%%%%%%%%%%%%%%%%%%%%%%%%%%%%
\section{Rational generating functions}

\subsection{Definitions and properties}
The Laurent series $A(t)$ with coefficients in the field $R$ is said
to be \emm rational,  if it can be written in the form   
$$
A(t)=\frac{P(t)}{Q(t)}
$$
where $P(t)$ and $Q(t)$ belong to $R[t]$.

There is probably no need to spend a lot of time explaining why such series
are simple and  well-behaved. We refer
to~\cite[Ch.~4]{stanley-vol1} and~\cite[Ch.~IV]{fla-sedg2} for a
survey of their  
properties. Let us  review briefly some of them, in the case where
$R=\qs$. The set of 
(Laurent) rational series is closed under sum, product, derivation,
reciprocals -- but \emm not under integration, as shown by
$A(t)=1/(1-t)$. The
coefficients $a_n$ of a rational series $A(t)$ satisfy a linear
recurrence relation with constant coefficients: for $n$ large enough,
$$
a_n=c_1a_{n-1}+c_2 a_{n-2} + \cdots + c_ka_{n-k}.
$$
The partial fraction expansion of $A(t)$ provides a closed form
expression of these coefficients of the form:
\beq
\label{rat-expr}
a_n= \sum_{i=0}^k P_i(n) \mu_i^n
\eeq
where the $\mu_i$ are the reciprocals of the roots of the denominator
$Q(t)$, and the $P_i$ are polynomials. In particular, if $A(t)$ has
non-negative integer coefficients, its radius of convergence $\rho$ is one its
the poles (Pringsheim) and the ``typical'' asymptotic behaviour
of $a_n$ is
\beq\label{asympt-rat}
a_n \sim \kappa \rho^{-n} n^d
\eeq
%where $\mu$ is the smallest modulus of a pole of $A(t)$, 
where
$d \in \ns$ and $\kappa$ is an algebraic number. The above
statement has to be taken with a grain of salt:   \emm all, poles of
minimal modulus may actually contribute to the dominant term in the
asymptotic expansion of $a_n$, as indicated by~\eqref{rat-expr}. 

Let us add that Pad\'e approximants allow us to \emm guess, whether a
\gf \ whose first coefficients are known is likely to be
rational. For instance, given the 10 first coefficients of the series
$$
A(t)=
 t+2\,{t}^{2}+6\,{t}^{3}+19\,{t}^{4}+61\,{t}^{5}+196\,{t}^{6}+629\,{t}
^{7}+2017\,{t}^{8}+6466\,{t}^{9}+20727\,{t}^{10}+O \left( {t}^{11}
 \right) ,
$$
it is easy to conjecture that actually
$$
A(t)={\frac {t \left( 1-t \right) ^{3}}{1-5\,t+7\,{t}^{2}-4\,{t}^{3}}}
.
$$
 Pad\'e approximants are implemented in most computer algebra
packages. For instance, the relevant {\sc Maple} command is {\tt
  convert/ratpoly}. 

\subsection{Walks on a digraph}
We now introduce our typical ``rational'' objects. 
Let $G=(V,E)$ be a directed graph with (finite) vertex set $V=
\llbracket p\rrbracket$
and (directed) edge set $E\subset V\times V$. A \emm  walk of length, $n$ on
$G$ is a sequence of vertices $w=(v_0, v_1, \ldots , v_n)$ such that for
all $i$, the  pair $(v_i, v_{i+1})$ is an edge. Such a walk
\emm goes from, $v_0$ \emm to, $v_n$. We denote $|w|=n$. 
Now assign to each directed edge
$e$ a weight (an indeterminate) $x_e$. Define the weight $x_w$ of the
walk $w$ as
the product of the weights of the edges it visits: more precisely,
$$
x_w = \prod_{i=0}^{n-1} x_{(v_i,v_{i+1})}.
$$
See Fig.~1(a) for an example. Let $X$ denote the (weighted)
adjacency matrix of $G$: for $i$ and $j$ in $\llbracket p\rrbracket$, the entry $X_{i,j}$
is $x_e$ if $(i,j)=e$ is 
an edge of $G$ and 0 otherwise. Let $W_{i,j}(t)$ be the weighted \gf\ of walks
going from $i$ to $j$:
$$
W_{i,j}(t)= \sum_{w : i \leadsto j} x_w t^{|w|}
.
$$
 It is well-known, and easy to prove, that $W_{i,j}$ is a rational
function in $t$ with coefficicients in $\qs[x_e, e\in
 E]$ (see~\cite[Thm. 4.7.1]{stanley-vol1}). 
\begin{theorem}
  The series $W_{i,j}(t)$ is the $(i,j)$-entry in the matrix $(1-tX)^{-1}$.
\end{theorem}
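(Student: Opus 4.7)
The plan is to expand $(1-tX)^{-1}$ as a geometric series and then identify coefficients combinatorially, walk by walk.

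First, I would verify that $1-tX$ is invertible in the ring of matrices over $\qs[x_e, e\in E][[t]]$. Since every entry of $tX$ is divisible by $t$, the geometric expansion
\[
(1-tX)^{-1} = \sum_{n\ge 0} t^n X^n
\]
makes sense as a formal identity: for each fixed $(i,j)$, only finitely many terms contribute to the coefficient of $t^n$. Hence the $(i,j)$-entry of $(1-tX)^{-1}$ is the formal power series $\sum_{n\ge 0} t^n (X^n)_{i,j}$.

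Next, I would prove by induction on $n$ the purely combinatorial identity
\[
(X^n)_{i,j} \;=\; \sum_{\substack{w:\, i\leadsto j \\ |w|=n}} x_w.
\]
The base case $n=0$ is immediate ($X^0=I$, and the only length-$0$ walk from $i$ to $j$ exists precisely when $i=j$, with weight $1$). For the induction step, I would expand
\[
(X^n)_{i,j} = \sum_{k\in \llbracket p\rrbracket} (X^{n-1})_{i,k}\, X_{k,j}
\]
and use the induction hypothesis together with the fact that $X_{k,j}$ is $x_{(k,j)}$ if $(k,j)\in E$ and $0$ otherwise. The sum on the right then enumerates walks of length $n$ from $i$ to $j$ according to their penultimate vertex $k$: each such walk decomposes uniquely as a walk of length $n-1$ from $i$ to $k$ followed by the edge $(k,j)$, and its weight $x_w$ factors correspondingly as $x_{w'}\cdot x_{(k,j)}$.

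Summing the identity over $n\ge 0$ weighted by $t^n$ and comparing with the definition of $W_{i,j}(t)$ gives $W_{i,j}(t) = \bigl((1-tX)^{-1}\bigr)_{i,j}$. There is no real obstacle here: the only subtlety is the bookkeeping between matrix multiplication and the concatenation-of-walks bijection, which is exactly what makes matrix powers the natural tool for enumerating walks in a digraph.
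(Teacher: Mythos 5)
Your proof is correct and is exactly the standard argument the paper has in mind: the paper does not spell out a proof but refers to it as well-known (citing Stanley, Thm.~4.7.1), and that classical proof is precisely the formal geometric expansion $(1-tX)^{-1}=\sum_{n\ge 0}t^nX^n$ together with the identification of $(X^n)_{i,j}$ with the weighted walks of length $n$ from $i$ to $j$. Nothing is missing; your induction on $n$ and the unique decomposition of a walk into its first $n-1$ steps plus the final edge are exactly the required bookkeeping.
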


\begin{figure}[t]
\begin{center}
\input{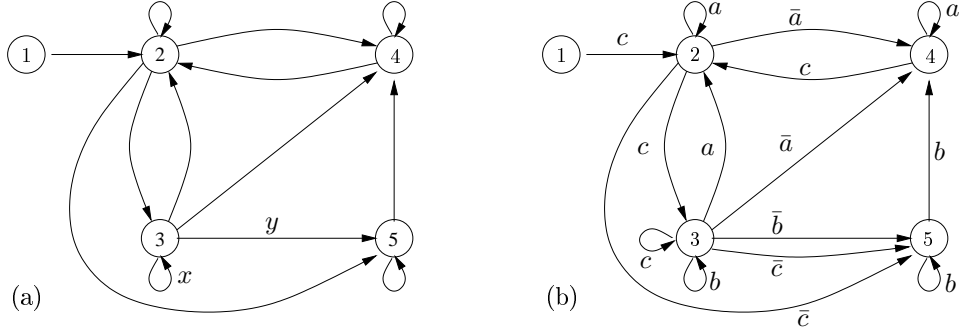}
\end{center}
\vskip -5mm\caption{(a) A weighted digraph. The default value of the weight is
  $1$. (b) A deterministic automaton on the alphabet $\{a,b,c,\bar
  a,\bar b,\bar 
  c\}$. The initial state is $1$ and and the final states  are $2$ and $3$.}
\label{fig:digraph}
\end{figure}

This theorem reduces the enumeration of walks on a digraph to the
calculation of the inverse of a matrix with polynomial
coefficients. It seems to be little known in the combinatorics
community that this inverse matrix can  be computed by studying the
elementary cycles of the digraph $G$.  This practical tool relies
 on Viennot's theory of \emm heaps of
pieces,~\cite{viennot-heaps}. Since it is little known, and often
convenient, let us  advertise it here. It will be illustrated further down.

An \emm elementary cycle, of $G$  is a
closed walk $w=(v_0, v_1, \ldots , v_{n-1},v_0)$ such that
$v_0,  \ldots , v_{n-1}$ are distinct. It is defined up to a cyclic
permutation of the 
$v_i$. That is, $(v_1, v_2, \ldots , v_{n-1}, v_0, v_1)$ is the same
  cycle as $w$.  A  collection $\gamma=\{\gamma_1, \ldots , \gamma_r\}$  of
(elementary) cycles is \emm non-intersecting, if the $\gamma_i$ are
  pairwise disjoint. The weight $x_\gamma$ of 
$\gamma$ is the product of the weights of the $\gamma_i$. We denote
  $|\gamma|=\sum|\gamma_i|$. 

\begin{proposition}[\cite{viennot-heaps}]\label{prop:viennot}
   The \gf\ of walks
going from $i$ to $j$ reads
$$
W_{i,j}(t)= \frac{N_{i,j}} D,
$$
where 
$$
D=\sum_{\gamma=\{\gamma_1, \ldots , \gamma_r\}} (-1)^r
x_{\gamma}t^{|\gamma|}
\quad\hbox{ and } \quad 
N_{i,j}=\sum_{w; \gamma=\{\gamma_1, \ldots , \gamma_r\}} (-1)^r  x_w
x_{\gamma}t^{|w|+ |\gamma|}.
$$
The polynomial $D$ is the alternating \gf\ of non-intersecting collections of 
cycles. In the expression of $N$, $\gamma$ a non-intersecting
collection of cycles and $w$ a 
self-avoiding walk going from $i$ to $j$, disjoint from the cycles
of $\gamma$. 
\end{proposition}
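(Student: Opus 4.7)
The plan is to prove the equivalent identity $D(t)\, W_{i,j}(t)=N_{i,j}(t)$ by a sign-reversing, weight-preserving involution. Expanding the product, the left-hand side is the signed sum, over all pairs $(w',\gamma)$ with $w'=(v_0,\ldots,v_n)$ an \emph{arbitrary} walk from $i$ to $j$ and $\gamma=\{\gamma_1,\ldots,\gamma_r\}$ a non-intersecting collection of elementary cycles \emph{possibly sharing vertices with $w'$}, of $(-1)^r x_{w'} x_\gamma\, t^{|w'|+|\gamma|}$. The quantity $N_{i,j}$ picks out precisely the pairs in which $w'$ is self-avoiding and $\gamma$ is vertex-disjoint from $w'$, so it suffices to match the remaining pairs into couples of opposite sign.

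The involution $\Phi$ is driven by the first ``anomaly'' encountered while reading $w'$ from left to right: let $k$ be the smallest index such that either (a) $v_k$ lies on some cycle of $\gamma$, or (b) $v_k=v_\ell$ for some $\ell<k$. If no such $k$ exists, $(w',\gamma)$ is a fixed point, contributing to $N_{i,j}$. In case (a), the vertex-disjointness of $\gamma$ forces a unique cycle $c\in\gamma$ through $v_k$; I rotate $c$ so that it starts and ends at $v_k$, splice it into $w'$ between positions $k$ and $k+1$, and delete $c$ from $\gamma$. In case (b), the index $\ell$ is unique (the $v_i$ with $i<k$ are distinct), the sub-walk $c=(v_\ell,\ldots,v_k)$ is an elementary cycle, and no vertex of $c$ lies on any cycle of $\gamma$ --- otherwise an anomaly would occur strictly before $k$. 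I excise $c$ from $w'$ and adjoin it to $\gamma$. In both cases $x_{w'}x_\gamma\, t^{|w'|+|\gamma|}$ is preserved while $r$ changes by $\pm 1$, so the sign flips.

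The main obstacle --- and essentially the only point that really needs a careful argument --- is to check that $\Phi^2=\mathrm{id}$, i.e.\ that after a case (a) move the resulting pair has its first anomaly at the closure of the inserted cycle and is of type (b). This reduces to three observations: the prefix $(v_0,\ldots,v_{k-1})$ is untouched and remains anomaly-free; position $k$ becomes a genuine first visit (since $c$ has left $\gamma$ and $\gamma$ was non-intersecting); and the newly inserted intermediate vertices of $c$ produce no earlier anomaly, by the elementarity of $c$ combined with the minimality of $k$ in the original pair. The check for (b)$\to$(a) is symmetric, and once these items are in place the cancellation is complete.

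Conceptually, this argument is the nuts-and-bolts version of Viennot's heap-of-pieces bijection~\cite{viennot-heaps}, under which a walk in $G$ decomposes uniquely as a self-avoiding walk from $i$ to $j$ crowned by a ``pyramid'' of elementary cycles; the proposition then specialises the generic heap identity $\text{gf of pyramids}=\text{gf of bases}\,/\,\text{signed gf of trivial heaps}$. I would sketch this interpretation after the direct proof, since it also underlies the practical computation of $W_{i,j}$ advertised in the paragraph following the statement.
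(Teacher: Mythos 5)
Your involution argument is correct: I checked the delicate points, in particular that at the \emph{first} anomaly the two cases are automatically exclusive (if $v_k$ repeated a vertex $v_\ell$ lying on a cycle of $\gamma$, then $\ell$ would already be an anomaly of type (a)), and that after an (a)-move the first anomaly of the new pair is indeed the closing occurrence of $v_k$, so that the (b)-move excises exactly the cycle just spliced in; your three observations are precisely what is needed, and the fixed points are exactly the terms of $N_{i,j}$, so $D\,W_{i,j}=N_{i,j}$ and $D$ is invertible as a power series (constant term $1$). Note, however, that the paper does not prove the proposition at all: it is quoted from Viennot's theory of heaps of pieces and only illustrated on the example of Fig.~1(a). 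In that framework the statement is a specialisation of the inversion lemma for heaps combined with the canonical bijection between walks from $i$ to $j$ and pairs (self-avoiding walk, heap of elementary cycles), which is the interpretation you sketch at the end. So your route is genuinely different in presentation: a self-contained, elementary sign-reversing involution (in the spirit of the combinatorial proofs of the transfer-matrix cofactor formula), versus an appeal to a general theory. The direct involution buys a short, verifiable proof needing no background; the heaps viewpoint buys the conceptual picture and its generalisations (pyramids, the other heap identities), which is what the survey relies on elsewhere, e.g.\ for directed animals.
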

To illustrate this result, let us determine the \gf\ of walks going
from $1$ to  $2$ and from $1$ to $3$ on the digraph of
Fig.~\ref{fig:digraph}(a). This graph contains 4 cycles of length 1, 2
cycles of 
length 2, 2 cycles of length 3 and 1 cycle of length 4. By forming all
non-intersecting collections of cycles, one finds:
%\begin{align*}
$$
  D(t)
=1-(3+x)t+(3+3x-2)t^2+(-1-3x+3+x-2)t^3+(x-1-x+1+x-y)t^4
$$
\vskip -7mm
$$\hskip -38mm
= 1-(3+x)t+(1+3x)t^2-2xt^3+(x-y)t^4.
%\end{align*}
$$
There is only one self-avoiding walk (SAW) going from 1 to 2, and one
SAW going from 1 to 3 (via the vertex 2).  The
collections of cycles that do not intersect these walks are formed
of loops, which gives
$$
N_{1,2}= t(1-t)^2(1-xt)\quad \hbox{and } \quad N_{1,3}= t^2(1-t)^2.
$$
Hence the \gf\ of walks that start from 1 and end at 2 or 3 is:
\beq\label{walk-enum}
W_{1,2} + W_{1,3}=\frac{N_{1,2} + N_{1,3}}{D}= \frac{t(1-t)^2(1+t-xt)}{
  1-(3+x)t+(1+3x)t^2-2xt^3+(x-y)t^4}.
\eeq

%%%%%%%%%%%%%%%%%%%%%%%%%%%%%%%%%%%%%%%%%%%%%%
\subsection{Regular languages and automata}

There is a very close connection between the collection of walks on a
digraph and the words of
% what is called in theoretical computer
%science a 
\emm regular languages,.  
%Equivalently,  between digraphs and \emm finite state automata,. 
%
Let $\cal A$ be an \emm alphabet,, that is, a finite set of symbols
(called \emm letters)., A \emm word, on $\cal A$ is a sequence $u=u_1 u_2
\cdots u_n$ of letters. The number of occurrences of the letter $a$ in
the word $u$ is denoted $|u|_a$. The \emm product, of two words $u_1 u_2
\cdots u_n$ and $v_1 v_2 \cdots v_m$ is  the concatenation 
 $u_1 u_2 \cdots u_nv_1 v_2 \cdots v_m$. The empty word is denoted
$\eps$. A \emm language, on $\cal A$ is a set of words. We define two
operations on  languages:

\begin{itemize}
 
\item[--] the product $\cal L K$ of two languages $\cal L$ and $\cal K$ is
   the set of words  $uv$, with $u\in\cal L$ and
   $v\in\cal K$; this product is easily seen to be associative,

%\vskip -16mm
\item[--]  the star $\cal L^*$ of the language $\cal L$ is the union of all
languages ${\cal L}^k$, for $k \ge 0$. By convention,  ${\cal L}^0$ is
reduced to the empty word $\eps$.
\end{itemize}

\noindent A \emm\fsa,  on $\cal A$ 
is a digraph $(V,E)$ \emm with possibly multiple edges,, together with:

-- a labelling of the edges  by letters of $\cal A$, that is to say, a
   function $L: E \rightarrow {\cal A}$,

-- an initial vertex $i$,

-- a set $V_f \subset V$ of final vertices.

\noindent 
 The vertices are usually called the \emm states, of
the automaton. The automaton is  \emm deterministic, if for every
state $v$ and every letter $a$, there is at most one edge labelled $a$
starting from $v$. 

%\begin{figure}[hbt]\begin{center}\input{automaton.pstex_t}\end{center}
%\caption{A deterministic automaton on the alphabet $\{a,b,c,\bar a,\bar b,\bar
%  c\}$. The initial state is $1$ and and the final states  are $2$ and $3$.} 
%\label{fig:digraph}\end{figure}

To every walk on the underlying multigraph, one associates a word on
the alphabet $\cal A$ by reading the letters met along
%of the edges visited by 
the walk. The  language ${\cal L}$ \emm recognized, by the
automaton is 
the set of words associated with walks going from the
initial state $i$ to one of the states of $V_f$. 
 For $j\in V$, let ${\cal
  L}_j$ denote the set of words associated with walks going from $i$
to $j$. These sets admit 
% the following 
a recursive description.
% : for all $j \in V$,
%$${\cal L}_j= \{\eps\}_{i=j}
% \cup _{k\in V}{\cal L}_k \cup_{e: k\leadsto i} \{L(e)\}
%$$
For  the automaton of
Fig.~\ref{fig:digraph}(b), one has $ {\cal L} = {\cal L}_2\cup {\cal
  L}_3$ with
$$
\begin{array}{llllllllllllllllllll}
{\cal L}_1& =&\{\eps\},&&&&&&&\\
{\cal L}_2& =&{\cal L}_1c \cup {\cal L}_2a \cup {\cal L}_3a\cup
{\cal L}_4c,&&&
{\cal L}_4& =&{\cal L}_2\bar a \cup {\cal L}_3\bar a \cup {\cal L}_4a \cup {\cal L}_5b,\\
{\cal L}_3& =&{\cal L}_2c\cup {\cal L}_3b\cup {\cal L}_3c,&&&
{\cal L}_5& =&{\cal L}_2 \bar c\cup{\cal L}_3\bar b\cup  {\cal L}_3\bar c
\cup {\cal L}_5b. 
\end{array}
$$

Remarkably, there also exists a non-recursive
combinatorial description  of the languages that are recognized by an
automaton~\cite [Thms. 3.3 and 3.10]{hopcroft}.

\begin{theorem}\label{th:regular}
    Let  $\cal L$ be a language on the alphabet $\cal A$. There exists a \fsa\
   that recognizes $\cal L$ if and only if
 $\cal L$ can be expressed in terms of finite languages on $\cA$,
 %  the elementary languages $\{a\}$, for $a \in \cal A$, the 
using a finite number of unions,    products and stars of languages.
 
If these conditions hold, $\cal L$ is said to be \emm
regular,. Moreover, there exists a \emm deterministic, automaton that
recognizes $\cal L$.
\end{theorem}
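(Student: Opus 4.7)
The plan is to establish the two implications separately, and then to extract a deterministic automaton from the non-deterministic one built in the constructive direction.

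For the implication ``regular $\Rightarrow$ recognized by a \fsa'', I would proceed by structural induction on the expression of $\cL$ from finite languages. The base case treats finite languages, which are finite unions of singletons $\{u\}$; each singleton is clearly recognized by a ``path'' automaton with $|u|+1$ states. For the inductive step I would give the three standard constructions. Given automata $A_1, A_2$ recognizing $\cL_1, \cL_2$: (i) for $\cL_1 \cup \cL_2$, take the disjoint union with a new initial state joined by $\eps$-edges to the old initial states; (ii) for $\cL_1 \cL_2$, add $\eps$-edges from every final state of $A_1$ to the initial state of $A_2$, keeping only the finals of $A_2$; (iii) for $\cL_1^*$, introduce a fresh initial-and-final state with $\eps$-edges to the initial state of $A_1$, and $\eps$-edges from each final state of $A_1$ back to it.  Since the paper's definition of a \fsa\ labels edges by genuine letters of $\cA$, a clean-up step is needed to eliminate the $\eps$-transitions, which is done by replacing each path of the form $v \to^{\eps^*} v' \to^a v'' \to^{\eps^*} v'''$ by a single edge $v \to^a v'''$ and adjusting the set of final states accordingly.

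Conversely, suppose $\cL$ is recognized by an automaton with state set $V=\llbracket p\rrbracket$, initial state $1$, and finals $V_f$.  For $0\le k\le p$ and $i,j\in V$, let $\cL^{(k)}_{i,j}$ denote the set of words labelling walks from $i$ to $j$ whose \emph{intermediate} vertices all lie in $\llbracket k\rrbracket$. The initial languages $\cL^{(0)}_{i,j}$ are finite: at most one letter per edge from $i$ to $j$, plus $\eps$ when $i=j$. Splitting walks according to whether they ever visit $k$ yields the key recursion
\[
\cL^{(k)}_{i,j} \;=\; \cL^{(k-1)}_{i,j} \;\cup\; \cL^{(k-1)}_{i,k}\,\bigl(\cL^{(k-1)}_{k,k}\bigr)^{\!*}\,\cL^{(k-1)}_{k,j},
\]
so by induction each $\cL^{(k)}_{i,j}$ is obtained from finite languages by unions, products and stars.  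Finally, $\cL = \bigcup_{j \in V_f} \cL^{(p)}_{1,j}$, which proves that $\cL$ is regular.

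For the last assertion, I would invoke the subset construction: from an automaton $(V,E)$ recognizing $\cL$, build a new automaton with state set $2^V$, initial state $\{1\}$, final states the subsets meeting $V_f$, and deterministic transition $\delta(S,a)=\{v' : \exists v\in S,\ (v,v')\in E,\ L((v,v'))=a\}$. A direct induction on word length shows that this deterministic automaton accepts exactly $\cL$.  The main obstacle in the whole argument is bookkeeping: handling $\eps$-transitions in the first direction, and verifying the correctness of the subset construction through a careful induction.  Neither is deep, but both must be done cleanly to match the no-$\eps$ definition of \fsa\ adopted in the paper.
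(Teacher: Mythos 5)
Your proof is correct, but note that the paper does not prove this statement at all: it is quoted as a classical result with a reference to Hopcroft and Ullman, and your argument (structural induction with $\eps$-transition constructions for union, product and star followed by $\eps$-elimination; the recursion on $\cL^{(k)}_{i,j}$ over allowed intermediate states for the converse; the subset construction for determinization) is essentially the standard proof found in that cited source. The only points needing care are the ones you already flag --- removing $\eps$-edges so as to match the paper's $\eps$-free definition of a \fsa\ (in particular making the initial state final when $\eps\in\cL$) and the induction on word length validating the subset construction --- and neither affects correctness.
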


\noindent{\bf Regular languages and walks on digraphs.}
  Take a deterministic automaton, and associate with it a weighted
digraph as follows: the vertices are those of the automaton, and  for
all vertices $j$ and $k$, if $m$  edges  go from $j$ to
$k$ in the automaton, they are replaced by a \emm single, edge labelled $m$
in the digraph.
%by the \emm number, of edges going from $j$ to $k$ in the automaton. 
For instance, the  automaton of
Fig.~\ref{fig:digraph}(b) gives the digraph to its left, with
$x=y=2$.  Clearly, the length GF of words of $\cal L$ is the
GF of (weighted) walks of this digraph going from the initial vertex
$i$ to one of the final vertices of $V_f$. For instance, according
to~\eqref{walk-enum}, the length
GF of the language recognized by the automaton of
Fig.~\ref{fig:digraph}(b) is
\beq\label{eq:regular}
A(t)={\frac {t \left( 1-t \right) ^{3}}{1-5\,t+7\,{t}^{2}-4\,{t}^{3}}}
.
\eeq

  Take a regular language $\cL$ recognized by a  deterministic
  automaton $\cA$. There  exists another   deterministic automaton
  that recognizes $\cL$ and does not contain multiple
  edges. The key  is to create a state $(j,a)$ for every edge labelled
  $a$   ending at $j$ in   the automaton $\cA$. The digraph associated
  with this new automaton has all its edges labelled 1, so that there
  exists a length preserving bijection between the words of $\cL$ and
  the walks on the digraph going from a 
   specified initial vertex $v_0$ to one of the vertices of a given
   subset $V_f$ of vertices.

Conversely, starting from a digraph with all
edges labelled 1, together with a specified  vertex $v_0$ and a set
$V_f$ of final vertices, it is easy to construct a regular language
that is in bijection with the walks of the graph going from $v_0$ to
$V_f$ (consider the automaton obtained by labelling all edges with
distinct letters). This shows that
\emm  counting words of regular languages is completely equivalent to
  counting walks in  digraphs.,
In particular, the set of rational series obtained in both types of
problems coincide, and have even been given a name:
\begin{definition}%[$\ns$-rational series]
  A series $A(t)=\sum_{n\ge 0} a_n t^n$ with  coefficients in $\ns$
is said to be $\ns$-\emm rational, if there exists a
  regular language having  \gf\ $A(t)-a_0$.
\end{definition}
The description of regular languages given by Theorem~\ref{th:regular}
implies that the set of 
$\ns$-rational series contains the smallest set of series containing
$\ns[t]$ and closed under sum, product and quasi-inverse. The converse
is true~\cite[Thm.~II.5.1]{salomaa}.
There exists a simple way to decide whether a given rational series
with coefficients in $\ns$ is
$\ns$-rational~\cite[Thms. II.10.2 and II.10.5]{salomaa}.
\begin{theorem}
\label{thm:soittola}
   A series $A(t)=\sum_{n\ge 0} a_n t^n$ with  coefficients in $\ns$ 
is  $\ns$-rational if and only if there exists a positive
integer $p$ such that for all $r\in\{0,\ldots, p\}$,
%$0\le r <p$ 
the series
$$
A_{r,p}(t):=\sum_{n\ge 0} a_{np+r}t^n$$
has a unique singularity of %maximal
minimal modulus (called \emm dominant,).
\end{theorem}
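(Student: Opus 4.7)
The plan is to handle the two directions separately. The ``only if'' direction follows relatively cleanly from the walk representation (Theorem~2.1) combined with Perron-Frobenius theory for non-negative matrices; the ``if'' direction is Soittola's theorem proper and forms the technical heart of the statement.

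For the forward direction, assume $A(t)$ is $\ns$-rational. By the equivalence between regular languages and weighted walks on digraphs established above, we may write $A(t)-a_0$ as a finite sum of entries $W_{i,j}(t)$ of $(I-tX)^{-1}$, for $X$ the adjacency matrix (with entries in $\ns$) of a digraph $G$. The poles of $W_{i,j}$ are inverses of the eigenvalues of $X$. Decomposing $G$ into strongly connected components and applying Perron-Frobenius to each component $C$, the eigenvalues of $X_C$ of maximum modulus form a regular polygon $\{\rho_C\,\omega : \omega^{d_C}=1\}$, where $d_C$ is the period of $C$. Let $\rho$ be the largest of the $\rho_C$, and take $p$ to be the least common multiple of the periods $d_C$ over the components achieving $\rho_C=\rho$. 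From the partial fraction expansion~\eqref{rat-expr}, the coefficient $a_{np+r}$ has the form $\sum_i P_i(np+r)\,\mu_i^{r}\,(\mu_i^p)^n$, and for each dominant eigenvalue $\mu_i=\rho\omega$ one has $\mu_i^p=\rho^p$ since $\omega^p=1$. Hence all dominant contributions collapse onto a single pole at $t=\rho^{-p}$ in $A_{r,p}(t)$, while the remaining eigenvalues produce strictly subdominant terms.

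For the converse, observe the identity $A(t)=\sum_{r=0}^{p-1} t^r A_{r,p}(t^p)$ and the stability of $\ns$-rational series under multiplication by $t^r$, substitution $t\mapsto t^p$, and finite sum. It therefore suffices to treat the case $p=1$: any rational series $B(t)\in\ns[[t]]$ with a unique dominant singularity is $\ns$-rational. Under this hypothesis, the decomposition~\eqref{rat-expr} forces an asymptotic of the form $b_n\sim\kappa\, n^d\,\rho^{-n}$ as in~\eqref{asympt-rat} with a single exponential rate. The idea is to construct an auxiliary $\ns$-rational series whose dominant analytic behaviour matches that of $B(t)$ (a carefully weighted sum of quasi-inverses involving $\rho t$), and then subtract iteratively $\ns$-rational corrections that kill the subdominant poles, checking at each stage that positivity of the coefficients is preserved.

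The main obstacle is precisely this last step: turning purely analytic information about a rational series into a combinatorial, automaton-level decomposition in which every intermediate series still lies in $\ns[[t]]$. This is the technical core of Soittola's theorem and is classically handled through a ``merge'' construction on positive matrix representations, which refines a given rational representation into an $\ns$-rational one under the dominant-singularity hypothesis. I would follow the exposition in Berstel-Reutenauer or Salomaa-Soittola here; once that technical lemma is in place, both implications fit cleanly into the framework of the preceding material.
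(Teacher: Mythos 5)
A preliminary remark: the paper itself does not prove this theorem; it is quoted from Salomaa--Soittola (Thms.~II.10.2 and II.10.5), so there is no internal proof to compare against. Judged as a standalone argument, your proposal leaves the substantive direction unproved. The reduction of the converse to the case $p=1$ via $A(t)=\sum_{r} t^r A_{r,p}(t^p)$ and the closure properties of $\ns$-rational series is fine, but the case $p=1$ \emph{is} Soittola's theorem: the step you describe as ``subtract iteratively $\ns$-rational corrections that kill the subdominant poles, checking at each stage that positivity of the coefficients is preserved'' is precisely the technical content, and you explicitly defer it to Berstel--Reutenauer / Salomaa--Soittola rather than carrying it out. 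Nothing in the surrounding material of the paper supplies that lemma, so the converse remains a citation, not a proof.

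The forward direction, as written, contains a concrete error: taking $p$ to be the lcm of the periods of only those strongly connected components attaining the maximal Perron root $\rho$ does not suffice. A dominant component contributes only on certain residue classes modulo its period; on the remaining classes the section is governed by components of smaller spectral radius, whose periods your $p$ ignores. For example, let $L=L_1\cup L_2$ over disjoint alphabets, where $L_1$ consists of the words of odd length on $4$ letters and $L_2$ of the words of length divisible by $3$ on $3$ letters; then $a_n$ equals $4^n$ for $n$ odd, plus $3^n$ whenever $3$ divides $n$, and $A(t)$ is $\ns$-rational. The maximal radius $4$ comes from a component of period $2$, so your recipe gives $p=2$; but $A_{0,2}(t)=\sum_{m\ge 0} 729^m t^{3m}=1/(1-729\,t^3)$ has \emph{three} singularities of minimal modulus, so the criterion fails for that $p$ (one needs $p=6$ here). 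The failure is exactly at the sentence ``the remaining eigenvalues produce strictly subdominant terms'': they are subdominant in $A$, but can become dominant in a section where the top components contribute nothing. Note also that for non-negative matrices the issue is not cancellation of dominant terms (which indeed cannot occur) but their vanishing identically on residue classes; a correct argument must choose $p$ using all components, or proceed by induction on the spectral radii of the sections.
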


There exist rational series with non-negative integer
coefficients that are \emm not, $\ns$-rational. For instance, let
$\alpha$ be such that $\cos \alpha=3/5$ and $\sin \alpha= 4/5$, and
define $a_n=25^n \cos(n\alpha)^2$. It is not hard to see that $a_n$ is
a non-negative integer. The associated series $A(t)$ reads
$$
A(t)=\frac{1-2t+225t^2}{(1-25t)(625t^2+14t+1)}.
$$
It has $3$ distinct dominant
poles. As $\alpha$ is not a rational multiple of
$\pi$, the same holds for all series $A_{0,p}(t)$, for all values of
$p$. Thus $A(t)$ is not $\ns$-rational. 

\subsection{The combinatorial intuition of rational \gfs}
\label{sec:intuition-rational}
We have described two families of combinatorial objects that
naturally yield rational \gfs : walks in a digraph and words of
regular languages. We have, moreover, shown that the enumeration of
these objects are  equivalent problems. It seems that these families
convey the ``right'' intuition about objects with a rational GF. By
this, we mean informally that:
% is, in the following -- experimental and informal -- sense:
\begin{itemize}
  \item[$(i)$] ``every'' family of objects with a rational
  GF has actually  an $\ns$-rational GF,
\item[$(ii)$] for almost all families of combinatorial
  objects with a rational GF, it is easy to foresee that there will
  be a bijection between these objects and words of a regular language.
\end{itemize}
Point $(ii)$ means that most of these families $\cal F$ have a clear
\emm automatic structure,, similar to the automatic  structure of
regular languages: 
roughly speaking, the objects of $\cal F$ can be constructed
recursively using unions of sets and concatenation of \emm cells,
(replacing letters). A more formal definition would simply paraphrase
the definition of automata.

Point $(i)$ means simply that I have never met a
counting problem that would yield a rational, but not $\ns$-rational
GF. This includes problems coming from algebra, like growth functions of
groups. On the contrary, Point $(ii)$ only concerns purely combinatorial
problems (but I do not want to be asked about the border between
combinatorics and algebra). It admits very few
counter-examples. Some will be discussed in
Section~\ref{sec:rat-difficult}. 
For the moment, let us illustrate the two above statements by
describing the automatic structure of certain classes of objects (some
being rather general), borrowed from~\cite[Ch. 4]{stanley-vol1}.

\subsubsection{Column-convex polyominoes}
A \emm polyomino, is a finite union of cells of the square lattice,
whose interior is connected. Polyominoes are considered up to a
translation. A polyomino is \emm column-convex, (cc) if its intersection
with every vertical line is connected. Let $a_n$ be the number of
cc-polyominoes having $n$ cells, and let $A(t)$ be the associated
\gf. We claim that  these polyominoes have an automatic structure.

\begin{figure}[t]
\begin{center}
\input{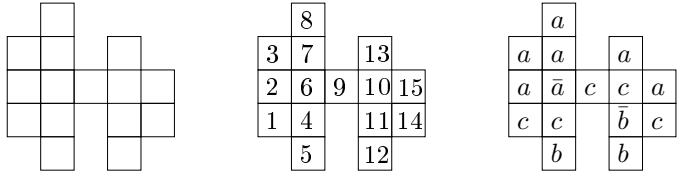}
\end{center}
\vskip -5mm\caption{A column-convex polyomino, with the numbering and encoding of
  the cells.} 
\label{fig:column}
\end{figure}

%\vskip -3mm
Consider a cc-\pol\ $P$  having $n$ cells. Let us number these cells from 1
to $n$ as illustrated in Fig.~\ref{fig:column}. The 
columns are visited from left to right. In the first column, cells are
numbered from bottom to top. In each of the other columns, the lowest
cell that has a left neighbour gets the smallest number; then the cells
lying below it are numbered from top to bottom, and finally the cells
lying above it are numbered from bottom to top.  Note that for all $i$,
the cells labelled $1,2, \ldots , i$ form a cc-\pol. This will be
essential in our description of the automatic structure of these
objects.
Associate with $P$ the
word $u=u_1 \cdots u_n$ on the alphabet $\{a,b,c\}$  defined by

-- $u_i=c$ (like Column) if the $i$th cell is the first to be visited in
its column,

-- $u_i=b$ (like Below) if the $i$th cell lies below the first visited
cell of its column,

-- $u_i=a$ (like Above) if the $i$th cell lies above the first visited
cell of its column.

\noindent Then, add a bar on the letter $u_i$ if the $i$th cell of $P$
has a South
neighbour, an East neighbour, but no South-East neighbour. (In other words,
the barred letters indicate where to start a new column, when the
bottommost cell of this new column lies above the bottommost cell of
the previous column.) This gives a word $v$ on the alphabet
$\{a,b,c,\bar a , \bar b ,\bar c\}$. It is not hard to see that the
map that sends $P$ on the word $v$ is a size-preserving bijection
between cc-\pol es and words recognized by the automaton of
Fig.~\ref{fig:digraph}(b). Hence by~\eqref{eq:regular}, the \gf\ of
column-convex polyominoes is~\cite{temperley}:
$$
A(t)={\frac {t \left( 1-t \right) ^{3}}{1-5\,t+7\,{t}^{2}-4\,{t}^{3}}}
.
$$

\subsubsection{P-partitions}
A \emm partition, of the integer $n$ into at most $k$ parts is
a non-decreasing  $k$-tuple  $\la=(\la_1, \ldots ,
\la_k)$ of nonnegative integers that sum to $n$. 
%That is, $0\le \la_1\le \cdots \le\la_k$. 
This classical number-theoretic notion is
generalized by the notion of \emm P-partitions., Let $P$ be a
\emm natural, partial order on $\llbracket k\rrbracket$ (by \emm
natural, we mean  that if $i<j$ in $P$, then $i<j$
in $\ns$). A \emm $P$-partition, of $n$ is a $k$-tuple $\la=(\la_1, \ldots , 
\la_k)$ of nonnegative
integers that sum to $n$ and satisfy $\la_i\le \la_j$ if $i\le j$ in
$P$. Thus when $P$ is the natural total order on $\llbracket
k\rrbracket$, a $P$-partition 
is simply a partition\footnote{A $P$-partition is usually defined as
  an \emm order-reversing, map from $\llbracket k\rrbracket$ to
  $\ns$~\cite[Section 
    4.5]{stanley-vol1}.  Both notions are of course completely
  equivalent.}.

%Given a partial order $P$ on $\llbracket k\rrbracket$, 
We are interested in the following series:
%\gf\ of $P$-partitions:
$$
F_P(t)=\sum_{\la} t^{|\la|},
$$
where the sum runs over all $P$-partitions 
%$\la$ 
and
$|\la|=\la_1+\cdots +\la_k$ is the \emm weight, of $\la$.  
% The complete theory of $P$-partition actually considers the refined \gf\ 
%$$ \sum_\la  x_1^{\la_1} \cdots x_k^{\la_k},$$
%which coincides with our series $F_P(t)$ when $x_i=t$ for all
%$i$. But, since our aim here is to explain why $P$-partitions are 
% regular objects, we will not not into this detailed enumeration. 

The case of ordinary partitions is easy to analyze: every partition
can be written in a unique way as a linear combination
\beq\label{linear-partitions}
c_1\la^{(1)} + \cdots +c_k\la^{(k)} 
\eeq
where $\la^{(i)}= (0,0,\ldots , 0, 1, 1, \ldots , 1)$ has exactly $i$
parts equal to $1$ and $c_i \in \ns$.  The weight of $\la^{(i)}$
is $i$, and one obtains:
\beq\label{part-gf}
F_P(t)=\frac 1 {(1-t)(1-t^2)\cdots(1-t^k)}.
\eeq
The automatic structure of (ordinary) partitions is transparent: since
they are constructed by adding a number of copies of $\la^{(1)}$, then
a number of copies of $\la^{(2)}$, and so on, there is a size
preserving bijection between these partitions and walks starting from
$1$ and ending anywhere
%t a vertex labelled $1,2, ...$ or $k$ 
in the following digraph:
\begin{center}
\input{partition-automaton.pstex_t}  
\end{center}
Note that this graph corresponds to $k=4$, and that an edge labelled
$[\ell]$ must be understood as a \emm sequence, of $\ell$ edges.
%  so as to preserve the size. 
These labels do \emm not, correspond to multiplicities.
Observe that the only cycles in this digraph are loops. This, combined with
Proposition~\ref{prop:viennot}, explains the factored form of the
denominator of~\eqref{part-gf}.

 Consider now the partial order on $\llbracket 4\rrbracket$ defined by $1<3,
 2<3$ and $2<4$. The
partitions of weight at most 2 are
$$
(0,0,0,0), (0,0,1,0), (0,0,0,1), (1,0,1,0), (0,0,1,1), (0,0,2,0),
(0,0,0,2),
$$
so that $F_P(t)=1+2t+4t^2+O(t^3)$. If one is brave enough to list
$P$-partitions of weight at most 20, the Pad\'e approximant of the
truncated series thus obtained is remarkably simple:
$$
F_P(t)=\frac{1+t+t^2+t^3+t^4}{(1-t)(1-t^2)(1-t^3)(1-t^4)}+O(t^{21}),
$$
and allows one to make a (correct) conjecture.

%\begin{figure}[hbt]\begin{center}\input{poset.pstex_t}\end{center}
%\caption{A partial order on $\{1,2,3,4\}$.} \label{fig:poset}
%\end{figure}

It turns out that the \gf\ of $P$-partitions is always a rational
series of denominator $(1-t)(1-t^2) \ldots (1-t^k)$. Moreover,
$P$-partitions obey our general intuition  about objects with a
rational GF. 
The following proposition, illustrated below by an example, describes
their automatic 
structure: the set of $P$-partitions
can be partitioned  into a finite number of subsets; in each of
these subsets, partitions have a structure similar
to~\eqref{linear-partitions}. Recall that a \emm linear extension, 
of $P$ is a 
bijection $\sigma$ on $\llbracket k\rrbracket$ such that
$\sigma(i)<\sigma(j)$ if $i<j$ in $P$.
\begin{proposition}[\cite{stanley-vol1}, Section 4.5]
Let $P$ be a natural order on $\llbracket k\rrbracket$.
%\begin{itemize}  \item  

For every $P$-partition $\la$, there exists a unique linear extension
  $\sigma$ of $P$ such that for all $i$, $\la_{\sigma(i)} \le
  \la_{\sigma(i+1)}$, the inequality being  strict if
  $\sigma(i)>\sigma(i+1)$.
We say that $\la$ is \emm compatible, with $\sigma$.

%\item
Given a linear extension $\sigma$, the  $P$-partitions that are
compatible with $\sigma$ can be written in a unique way as a linear
combination with coefficients in $\ns$:
\beq\label{P-part-structure}
\la^{(\si,0)}+ c_1\la^{(\si,1)} + \cdots +c_k\la^{(\si,k)} 
\eeq
where $\la^{(\si,0)}$ is the smallest $P$-partition compatible with
$\si$:
$$
\la^{(\si,0)}_{\si(j)}= \left| \{i<j: \si(i)>\si(i+1)\}\right| \quad \hbox{
  for } 1\le j \le k,
$$
and for $1\le i\le k$,
$$
  (\la^{(\si,i)}_{\si(1)}, \ldots,  \la^{(\si,i)}_{\si(k)})=  (0,0,\ldots , 0, 1, 1,
  \ldots , 1)$$ has exactly $i$ 
parts equal to $1$.
% and $c_i \in \ns$. 
Thus the GF of these  $P$-partitions is
$$
F_{P,\sigma}(t)=\frac{t^{e(\sigma)}}{(1-t)(1-t^2) \ldots (1-t^k)}
$$
where $e(\sigma)$ is a variant of the \emm Major index, of  $\sigma$:
$$
e(\sigma)=\sum_{i : \sigma(i)>\sigma(i+1)} (k-i).
$$
\end{proposition}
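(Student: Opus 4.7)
The plan is to prove the proposition in three moves: establish the existence and uniqueness of the compatible linear extension $\sigma$ by a sorting argument, exhibit a weight-shifting bijection between $P$-partitions compatible with $\sigma$ and ordinary partitions in at most $k$ parts, and then read off the generating function. For the first move, given a $P$-partition $\la$, I would define $\sigma$ by listing the positions $1, \ldots , k$ in non-decreasing order of $\la_i$, breaking ties in increasing numerical order. This rule produces exactly one permutation. The compatibility conditions of the proposition hold by construction: equal values of $\la$ only occur at consecutive positions of $\sigma$ with $\sigma(i)<\sigma(i+1)$, so no descent ever sits between equal values. To see that $\sigma$ is a linear extension of $P$, I would invoke that $P$ is natural and that $\la$ is a $P$-partition: if $i<j$ in $P$ then $i<j$ in $\ns$ and $\la_i\le \la_j$, which places $i$ before $j$ in the sort in both the strict-inequality case and the tie case. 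Uniqueness is forced by the compatibility conditions alone: ``non-decreasing'' pins down the relative order of positions with distinct $\la$-values, and ``strict at descents'' forbids a descent between positions carrying equal values, so ties must be broken by increasing index.

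For the second move, I would fix $\sigma$ and set $\mu_i := \la_{\sigma(i)}$, so that $\mu$ is non-decreasing with strict jumps exactly at the descents of $\sigma$. Writing $d_i := |\{j < i : \sigma(j)>\sigma(j+1)\}|$ for the number of descents of $\sigma$ lying strictly before position $i$, the shifted sequence $\nu_i := \mu_i - d_i$ is non-decreasing and non-negative: the $+1$ increment in $d$ at each descent exactly absorbs the mandatory strict jump in $\mu$. Hence $\nu$ is an ordinary partition with at most $k$ parts, and inverting this construction gives a bijection between $P$-partitions compatible with $\sigma$ and such partitions. Using the standard decomposition $\nu=\sum_{i=1}^k c_i \nu^{(i)}$ underlying~\eqref{linear-partitions}, where $\nu^{(i)}$ has $i$ trailing $1$s, I would then translate back to the stated decomposition of $\la$: the ground state $\nu=0$ corresponds to $\mu_i=d_i$, hence to $\la_{\sigma(j)}=d_j$, matching the formula for $\la^{(\sigma,0)}$, and each $\nu^{(i)}$ lifts to the claimed $\la^{(\sigma,i)}$.

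The bijection shifts weight by the constant $\sum_{j=1}^k d_j$, which, after exchanging the order of summation, equals $\sum_{j: \sigma(j)>\sigma(j+1)} (k-j) = e(\sigma)$. The formula $F_{P,\sigma}(t) = t^{e(\sigma)}/((1-t)(1-t^2)\cdots(1-t^k))$ then follows from~\eqref{part-gf}. The main technical point to get right is the first move: making sure that the tie-breaking rule is simultaneously compatible with the strict-inequality clause on descents and with the linear-extension requirement on $\sigma$, which is precisely where the naturalness of $P$ enters. Once that is pinned down, the remaining two moves reduce to a routine translation of the classical partition decomposition through the shift by $(d_1, \ldots , d_k)$.
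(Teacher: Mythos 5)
Your proof is correct; note that the paper itself contains no proof of this proposition (it is quoted from Stanley, Section 4.5), and your argument --- sorting the indices by the value $\la_i$ with ties broken by increasing index, using the naturalness of $P$ to see that the resulting word is a linear extension, then shifting by the descent counts $d_i$ to reduce to ordinary partitions with at most $k$ parts, which yields the factor $t^{e(\sigma)}$ --- is precisely the classical argument underlying that citation. The only step worth spelling out a little more is the inverse direction of your bijection: given a partition $\nu$, the tuple $\la$ defined by $\la_{\sigma(i)}=\nu_i+d_i$ satisfies the $P$-inequalities exactly because $\sigma$ is a linear extension (any pair $a<b$ in $P$ appears in order in the word $\sigma$, so the non-decreasing sequence of values respects it), which is what guarantees the image of the construction really consists of $P$-partitions compatible with $\sigma$.
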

\noindent{\bf Example.} Let us return to the order $1<3$, $2<3$ and
$2<4$. The 5 linear 
extensions  are $1234, 2134, 1243, 2143$ and $2413$. Take
$\sigma=2143$. The $P$-partitions $\la$ that are compatible with $\sigma$
are those that satisfy $\la_2<\la_1\le \la_4<\la_3$. The smallest of
them is thus $\la^{(\si,0)}=(1,0,2,1)$. Then
$\la^{(\si,1)}=(0,0,1,0)$, $\la^{(\si,2)}=(0,0,1,1)$,
$\la^{(\si,3)}=(1,0,1,1)$ and $\la^{(\si,4)}=(1,1,1,1)$.

\subsubsection{Integer points in a convex polyhedral
  cone~(\cite{stanley-vol1}, Sec.~4.6)}%\cite[Sec.~4.6]{stanley-vol1}} 
\label{sec:cone}
Let $\cal H$ be a finite collection of %rational 
linear half-spaces of $\rs^m$ of the form $c_1\al_1+\cdots +
c_m\al_m\ge 0$, with $c_i\in \zs$.
We are interested in the set $\cE$ of % enumeration of 
\emm non-negative integer points, 
 $\al=(\al_1, \ldots , \al_m)$
lying in the intersection of those half-spaces.
%Let $\cal E$ denote the set of these points. 
For instance, we could
have the following set $\cal E$, illustrated in Fig.~\ref{fig:polyhedron}(a):
\beq
\label{cone-example}
{\cal E} =\{(\al_1, \al_2)\in \ns^2: 2\al_1 \ge \al_2 \hbox{ and }
2\al_2 \ge \al_1\}.
\eeq
% , and let $\bE= E\cap \ps^m$ be the set of totally positive
% solutions. 

%\begin{figure}[hbt] \begin{center} \input{simplex.pstex_t}
%\end{center}\caption{Integer points in a simplicial cone.} 
%\label{fig:simplex}
%\end{figure}

\begin{figure}[hbt]
\begin{center}
\input{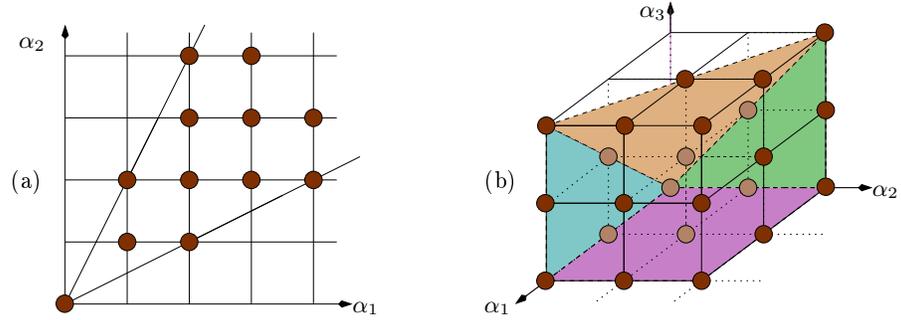}
\end{center}
\vskip -5mm\caption{Integer points in a  polyhedral cone.
%: (a) a simplicial case   (b) a non-simplicial case.
} 
\label{fig:polyhedron}
\end{figure}

Numerous enumerative problems (including $P$-partitions) can be formulated in terms of
linear inequalities as above. The \gf\ of $\cal E$ is
$$
E(t)= \sum_{\al \in \cal E} t^{|\al|},
$$
where $|\al|= \al_1+\cdots +\al_m$. In the above example,
$
E(t)=1+t^2+2t^3+t^4+2t^5+3t^6+2t^7+O(t^8).
$
% The \gf\ $\bar E(t)$ of $\bE$ is defined similarly. 

The set $\cal E$ is a monoid (it is closed under summation).  In
general, it is not a \emm free, 
monoid. Geometrically, the set $\cal C$ of non-negative \emm
real, points in the intersection of the half-spaces of $\cal H$ forms
a \emm pointed convex polyhedral cone, (the term \emm pointed, means that
%the intersection of a finite number of linear half-spaces that
it does not contain a line), and  $\cal
E$ is the set of integer points in $\cC$.%this cone.
%, and the theory can
%actually be developed for the (more generally) general setting of
%integer points in a pointed convex polyhedral cone.

\medskip
\noindent{\bf The simplicial case.} 
In the simplest case, the 
cone $\cal C$  is simplicial. This implies
that the \emm monoid, $\cE$ is
\emm simplicial,, meaning that there exists linearly independent vectors $\al^{(1)},
\ldots , \al^{(k)}$ such that
$$
{\cal E}= \left\{ \al \in \ns^m: \al= q_1 \al^{(1)} + \cdots + q_k
\al^{(k)} \hbox{     with } q_i \in \qs, \ q_i \ge 0\right\}.
$$
This is the case in  Example~\eqref{cone-example}, with
$\al^{(1)}=(1,2)$ and $\al^{(2)}=(2,1)$.
The \emm interior, of $\cal E$ (the set of points of $\cE$ that are
not on the boundary of $\cC$) is then
\beq\label{interior}
\bE= \left\{ \al \in \ns^m: \al= q_1 \al^{(1)} + \cdots + q_k \al^{(k)} \hbox{
    with } q_i \in \qs, q_i > 0\right\}.
\eeq
Then 
%it can be proved that 
there
exists  a finite subset $\cal D$ of 
$\cal E$ [resp.  $\bD$ of $\bE$] such that every element of $\cal E$
[resp.~$\bE$] can be written uniquely in the form
\beq\label{poly-structure} 
\al= \be + c_1\al^{(1)} +\cdots +  c_k\al^{(k)},
\eeq
with $\beta \in \cal D$ [resp. $\beta \in \bD$] and $c_i \in
\ns$~\cite[Lemma 4.6.7]{stanley-vol1}. In our running example~\eqref{cone-example},  taken with
$\al^{(1)}=(1,2)$ and $\al^{(2)}=(2,1)$, one has ${\cal D}=
  \{(0,0),(1,1),(2,2)\}$ while $\bD=  \{(1,1),(2,2),(3,3)\}$.
Compare~\eqref{poly-structure}  with the
structure found for 
$P$-partitions~\eqref{P-part-structure}. Thus  $\cal E$ and $\bE$ have
an automatic structure and their GFs read
$$
E(t)= \frac{\sum_{\be \in \cal D} t^{|\be|} }{\prod_{i=1}^k
  (1-t^{|\al^{(i)}|})}
\quad \left[\hbox{resp. } \quad \bar E(t)= \frac{\sum_{\be \in  \bD} t^{|\be|} }{\prod_{i=1}^k
  (1-t^{|\al^{(i)}|})} \right].
$$
In Example~\eqref{cone-example}, one thus obtains
$$
E(t)=\frac{1+t^2+t^4}{(1-t^3)^2}=\frac{1-t+t^2}{(1-t)(1-t^3)} \quad
\hbox{ and }\quad \bar E(t)=t^2E(t).
$$

\medskip
\noindent{\bf The general case.} 
%it can be proved that
 The set 
$\cal E$ can always be partitioned into a finite number of sets $\bF$
 of the form~\eqref{interior}, 
where $\cal F$ is a simplicial monoid~\cite[Ch. 4,   Eq.~(24)]{stanley-vol1}. 
% The same holds for the set $\bE$ of totally positive solutions. 
Thus $\cE$,
% and $\bE$, 
as a finite union of sets with an automatic
 structure, has an automatic structure as well. The associated
\gf\ $E(t)$ is $\ns$-rational, with a denominator which is a product of
cyclotomic polynomials. 

Consider, for example, the set
$$
{\cal E}=\{(\al_1,\al_2,\al_3) \in \ns^3 : \al_3\le \al_1+\al_2\}.
$$
The cone $\cal C$ of non-negative \emm real, points $\al$ satisfying $\al_3\le
\al_1+\al_2$ is \emm not,  simplicial, as it has 4 faces of
dimension 2, lying respectively in the hyperplanes $\al_i=0$ for
$i=1,2,3$ and $\al_3= \al_1+\al_2$ (Fig.~\ref{fig:polyhedron}(b)). 
But it is the
union of two simplicial cones ${\cal C}_1$ and  ${\cal C}_2$, obtained
by intersecting $\cal C$ with 
the half-spaces $\al_1\ge \al_3$ and  $\al_1\le \al_3$,
respectively. Let $\cE_1$ [resp. $\cE_2$] denote the
set of integer points of ${\cal C}_1$ [resp.  ${\cal C}_2$].

The fastest way to obtain the \gf\ $E(t)$ is to write
\beq\label{count-fast}
E(t)=E_1(t)+E_2(t)-E_{12}(t)
\eeq
where $E_{12}(t)$ counts integer points in the intersection of  ${\cal
  C}_1$ and  ${\cal C}_2$ (that is, in the plane $\al_1= \al_3$). Since ${\cal
  E}_1$,  ${\cal E}_2$ and  ${\cal
  E}_1\cap {\cal E}_2$ are  simplicial cones (of dimension 3, 3 and 2
respectively), the method presented above for simplicial cones
applies. Indeed,
  ${\cal   E}_1$ [resp. ${\cal   E}_2$;  ${\cal   E}_{12}$] is the set
of linear combinations (with  coefficients in $\ns$) of $(1,0,1)$,
$(0,1,0)$ and $(1,0,0)$ [resp. 
  $(1,0,1)$, $(0,1,0)$ and $(0,1,1)$; 
  $(1,0,1)$ and $(0,1,0)$].
This implies:
$$
E(t)= \frac 1{(1-t)^2(1-t^2)} + \frac 1{(1-t)(1-t^2)^2} - \frac
1{(1-t)(1-t^2)} =\frac{1+t+t^2}{(1-t)(1-t^2)^2}.
$$
However, the ``minus'' sign in~\eqref{count-fast} prevents us from seeing
directly the automatic nature of $\cal E$ (the difference of
$\ns$-rational series is not always $\ns$-rational). This
structure only becomes clear when we write $\cal E$ as the disjoint
union of the interiors of all simplicial monoids induced by the triangulation
of $\cal C$ into  ${\cal  C}_1$ and   ${\cal C}_2$. These monoids are
the integer points of the faces (of all possible dimensions) of
${\cal  C}_1$ and 
${\cal C}_2$. As there are 12 such faces (more precisely, 1 [resp. 4,
  5, 2] faces of dimension 0 [resp. 1, 2, 3]), this gives $\cal E$ as
the disjoint union of 12 sets having an automatic structure of the
form~\eqref{interior}. %This shows that $E(t)$ is $\ns$-rational.

%%%%%%%%%%%%%%%%%%%%%%%%%%%%%%%%%%%%%%%%%%%%%%%%%%%
\subsection{Rational \gfs: more difficult questions}
\label{sec:rat-difficult}
\subsubsection{Predicting rationality}\label{sec:predicting}
We wrote in Section~\ref{sec:intuition-rational} that it is usually easy to foresee, to
predict when a class of combinatorial objects has  a rational
GF. There are a few  exceptions. Here is
one of the most remarkable ones.

\begin{example}[{\bf Directed animals}]
A \emm directed animal with a compact source of size, $k$ is a finite set of
points $A$ on the square lattice $\zs^2$ such that:

-- the points $(-i,i)$ for $0\le i <k$ belong to $A$; they are called
   the \emm source points,,% of $A$,

-- all the other points in $A$ can be reached from one of the source
   points by a path made of North and East steps, having all its
   vertices in $A$. 

\noindent See Fig.~\ref{fig:compact} for an illustration. A similar
notion %also illustrated in that figure,  
exists for the triangular
lattice. It turns out that these animals have extremely simple \gfs~\cite{gouyou-viennot,betrema-penaud}.

\begin{figure}[t]
\begin{center}
\input{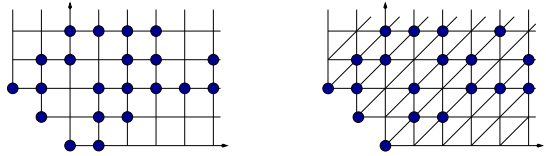}
\end{center}
\vskip -5mm\caption{Compact-source directed animals on the square and triangular
  lattices.}  
\label{fig:compact}
\end{figure}

\begin{theorem}\label{thm:compact}
  The number of compact-source directed animals of cardinality $n$ is
  $3^{n-1}$ on the square lattice, and $4^{n-1}$ on the triangular
  lattice. 
\end{theorem}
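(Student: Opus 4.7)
The strategy is to exhibit, for each lattice, a bijection between compact-source directed animals of size $n$ and words of length $n-1$ over an alphabet of size $3$ (resp.~$4$). A first reflex is to try an inductive ``growth'' argument, peeling the extremal cell of the animal with respect to some total order and showing that each animal of size $n-1$ extends in exactly $3$ (resp.~$4$) ways. I would begin by testing this: order cells of $A$ by diagonal $x+y$, breaking ties by $y$, and peel the $\prec$-maximum cell. One quickly checks that peeling is well-defined (the $\prec$-max cell is a ``leaf'' of the directed structure and is never a source-only cell except when $A$ is pure source), but small examples reveal that the \emph{number of valid extensions is not constant}: already the three elements of $\cal D_2$ admit $4$, $2$, and $3$ extensions respectively, summing to $9=3^2$. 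So a naive local growth fails and a cleverer encoding is required.

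The natural candidate is to exploit Viennot's theory of heaps of pieces~\cite{viennot-heaps}, which is already invoked in the excerpt. I would first construct a bijection between compact-source directed animals on the square lattice and \emph{pyramids of dimers on $\zs$}: the pieces are intervals $[i,i+1]$, two pieces commute iff they are disjoint, and a pyramid is a heap with a unique maximal element. The bijection reads the animal diagonal by diagonal: on diagonal $x+y=j$, the set $S_j=\{y:(j{-}y,y)\in A\}$ is encoded by placing dimers at level $j$ at the maximal runs of $S_j$. The compact-source condition translates into the pyramid property, and the total number of dimers is $n-1$ (one less than the animal size, with the ``$-1$'' accounting for the apex).

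The second step counts pyramids. Given a pyramid of $m\ge 1$ dimers, I would build it from the apex by adding one dimer at a time to the ``bottom frontier''. The key claim is that at every step there are exactly three admissible positions for the new dimer (e.g.\ below an existing dimer shifted left, shifted right, or directly below it), giving $3^{m-1}$ pyramids of $m$ dimers with fixed apex, i.e.\ $3^{n-1}$ compact-source directed animals of size $n$. For the triangular lattice one replays the same scheme with trimers (segments of length $3$) in place of dimers: the local geometry around each growing front offers four admissible positions instead of three, yielding $4^{n-1}$.

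The main obstacle is the correctness of the first bijection: one must verify that every pyramid of dimers arises from a unique compact-source animal, and in particular that the apex and pyramid property exactly encode the compactness of the source. Once this is in place, the counting step is combinatorially transparent, since the growth from the apex is genuinely $3$-valued (resp.\ $4$-valued) at every stage, unlike the failed direct peeling above. This is essentially the route taken in~\cite{gouyou-viennot,betrema-penaud}, where detailed proofs can be found.
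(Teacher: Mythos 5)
You have the right framework in mind (Viennot's heaps, which is indeed how the paper proceeds in its section on directed animals), but both of your key claims are wrong, and the errors do not cancel. First, in the animal--heap correspondence each cell of the animal is replaced by one dimer, so a compact-source animal of size $n$ corresponds to a heap of $n$ dimers, not $n-1$; and compact-source animals do \emph{not} correspond to pyramids. Pyramids (heaps with a unique minimal piece) encode \emph{single-source} animals; compact-source animals correspond to heaps whose minimal dimers are adjacent (heaps with a \emph{compact basis}). Your ``maximal runs'' encoding also loses the lengths of the runs, so it cannot be injective. Second, the counting step fails: it is not true that a pyramid grows from its apex with exactly three admissible positions at every stage. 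Pyramids with $m$ dimers are the single-source animals of size $m$, counted by $1,2,5,13,35,\dots$ with generating function $P(t)=\frac 12\bigl(\sqrt{(1+t)/(1-3t)}-1\bigr)$, not by $3^{m-1}$. This is exactly the same non-constant-valence phenomenon that made you reject the naive peeling at the start; the quantity that equals $3^{n-1}$ is the number of heaps of $n$ dimers with a compact basis, and no uniform $3$-way branching is known for it (the paper explicitly remarks that the $3$-letter encoding of Gouyou-Beauchamps and Viennot does not make the structure transparent).

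The paper's actual argument is generating-functional rather than a direct $3$-valued growth: after translating animals into heaps of dimers, a heap with a compact basis of $k$ minimal dimers factors uniquely as a pyramid times $k-1$ half-pyramids, so its generating function is $P(t)H(t)^{k-1}$; the context-free decompositions $P=H(1+P)$ and $H=t+tH+tH^2$ give $P$ and $H$ in closed form, and summing over $k$ yields $P/(1-H)=t/(1-3t)$, whence $3^{n-1}$. The triangular-lattice case is handled the same way with dimers under a modified non-commutation rule (pieces may sit exactly on top of one another), not with trimers, and leads to $t/(1-4t)$. To repair your write-up you would need to replace your ``pyramid with $n-1$ dimers'' claim by the compact-basis correspondence, and replace the claimed $3$-valued growth by the pyramid/half-pyramid factorization and the resulting algebraic computation.
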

The corresponding GFs are respectively $t/(1-3t)$
and $t/(1-4t)$, and are as rational as a series can be. There is at
the moment no simple combinatorial intuition as to why these animals
have rational GFs.  A
bijection between square lattice
animals and words on a 3-letter alphabet was described
in~\cite{gouyou-viennot}, but it does not shed a clear 
light on the structure 
of these objects. Still, there is now a
convincing explanation of the \emm algebraicity, of these
series (see Section~\ref{sec:diriges}).

%\begin{figure}[h,b,t] \begin{center} \input{compact3.pstex_t}
%\caption{There are 9 compact-source directed animals with 3 points.}
%\label{compact} \end{center} \end{figure}
\end{example}
\smallskip

\begin{example}[{\bf The area under Dyck paths}]\label{ex:area}
  Another family of (slightly less natural) examples
% , which are, in my opinion, less natural,
is provided by the enumeration of points lying below certain lattice
paths. For instance, let us call \emm Dyck path of length, $2n$ 
  any path $P$ on $\zs^2$ formed of steps 
$(1,1)$ and $(1,-1)$, that starts  from $(0,0)$ and ends at $(2n,0)$
without ever hitting a point with a negative ordinate. The \emm area,
below $P$ is the number of non-negative integer points $(i,j)$, with
$i\le 2n$, lying weakly below $P$ (Fig.~\ref{fig:dyck-area}). It
turns out that the sum of the areas of 
Dyck paths of length $2n$ is simply
$$
\sum_{P: |P|=2n} a(P)=4^n.
$$
Again, the rationality of the associated \gf\ does not seem easy to
predict, but there are good combinatorial reasons explaining why it is
\emm
algebraic,. See~\cite{chottin-cori,pergola-moments}
 for a direct explanation of this 
result, references, and a few variations on this phenomenon, first
spotted by Kreweras~\cite{kreweras-aire}.
\end{example}

\begin{figure}[hbt]
\begin{center}
\input{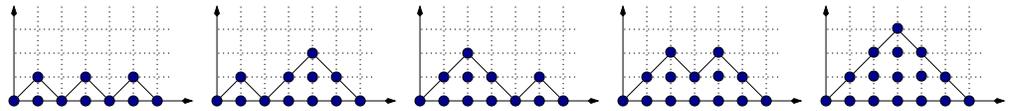}
\end{center}
\vskip -5mm\caption{The 5 Dyck paths of length 6 and the $4^3=64$ points lying below.}  
\label{fig:dyck-area}
\end{figure}

Finally, let us mention that our optimistic statement about how easy
it is to predict the rationality of a \gf\ becomes less and less true
as we move from purely combinatorial problems to more algebraic
ones. For instance, it is not especially easy to foresee that a
%finitely presented
 group
has an automatic structure~\cite{epstein}. 
%When it has, then there exists a finite presentation by generators
%and relations... 
%
Let us give also an example coming from number theory. Let
$P(x)\equiv P(x_1, \ldots , x_r)$ be a polynomial with integer coefficients, and
take $p$ a prime. For $n\ge 0$, let $a_n$ be the number of $x \in 
(\zs/p^n\zs)^r$ such that $P(x)=0 \mod p^n$.  Then the \gf\ $\sum_n a_n
t^n$ is rational. A related result holds with $p$-adic
solutions~\cite{denef,igusa}.  

\subsubsection{Computing a rational \gf}
Let us start with an elementary, but important observation. Many
enumerative problems, including some very hard, can be \emm approximated, by
problems having a rational GF. To take one example, consider the
notoriously difficult problem of counting \emm self-avoiding polygons,
(elementary cycles)   
 on the square lattice. It is easy to convince
oneself that the \gf\ of SAP lying in a horizontal strip of height
$k$ is rational for all $k$. This does not mean that it will be easy
(or even possible, in the current state of affairs) to compute the
corresponding \gf\  when $k=100$. Needless to say, there is at the
moment no hope to express this GF for a \emm generic, value of
$k$. The \gf\ of SAP having $2k$
horizontal steps can also be seen to be rational. Moreover, these SAP
can be described in terms of linear inequalities (as in
Section~\ref{sec:cone}), which implies that the denominator of the
corresponding series $G_k$ is a product of cyclotomic polynomials. But
again, no one knows what this series is for a generic value of $k$, or
even for $k=100$. Still, some progress have been made 
recently, since it has been proved that the series $G_k$ have more and
more poles as $k$ increases, which means that their denominators
involve infinitely many cyclotomic polynomials~\cite{rechnitzer-sap}. This
may be considered as a \emm proof of the difficulty, of this enumerative
problem~\cite{guttmann-solvability}.  

\medskip
In general, computing the (rational) \gf\ of a family of objects
depending on a parameter $k$ may be non-obvious, if not difficult,
even if the objects are 
clearly regular, and even if the final result turns out to be nice. A
classical example is provided by the growth functions of 
 Coxeter groups~\cite{macdonald}. Here is a more
 combinatorial example. A partition $\la=(\la_1,
\ldots , \la_k)$ 
% with (at most) $k$ parts 
is said to be a $k$\emm-Lecture Hall partition, ($k$-LHP) if
$$
0\le \frac{\la_1}1 \le \frac{\la_2}2 \le \cdots \le \frac{\la_k}  k.
$$
Since these partitions are defined by linear inequalities, it follows
from Section~\ref{sec:cone} that their  weight \gf\ is rational, with a
denominator formed of cyclotomic polynomials. Still, there is no clear
reason to expect that~\cite{mbm-kimmo-lhp1}:
$$
\sum_{\la \ k-\hbox{\scriptsize{LHP}}} t^{|\la|}=\frac 1{(1-t)(1-t^3) \cdots (1-t^{2k-1})}.
$$
Several proofs have been given for this result and variations on
it. See for instance~\cite{mbm-kimmo-lhp2,corteel-savage-lee} and
references in the latter paper. Some of these proofs are based on a bijection
between lecture hall partitions 
and partitions into parts taken in $\{1,3, \ldots , 2k-1\}$, but these
bijections are never really simple~\cite{yee-lhp,eriksen-lhp}.

\subsubsection{$\ns$-rationality}
As we wrote in Section~\ref{sec:intuition-rational}, we do not know of a counting problem
that would yield a rational, but not $\ns$-rational series. It would
certainly be interesting to find one (even if it ruins some parts of
this paper).

Let us return  to Soittola's criterion for
$\ns$-rationality (Theorem~\ref{thm:soittola}). 
%Once a rational series is given, it is not too hard to 
%test whether the second condition holds. However, 
It is not always
easy to prove that a rational series has non-negative
coefficients. For instance, it was conjectured in~\cite{stanton-conj} that
for any odd $k$, the number of  partitions of $n$
into parts taken in $\{k, k+1, \ldots , 2k-1\}$ is a non-decreasing
function of $n$, for $n\ge 1$. In terms of \gfs, this means that the
series
$$
q+ \frac{1-q}{(1-q^k)(1-q^{k+1}) \cdots (1-q^{2k-1})}
$$
has non-negative coefficients. This was only proved
recently~\cite{stanton}. When $k$ is even, a similar result holds for the
series
$$
q+ \frac{1-q}{(1-q^k)(1-q^{k+1}) \cdots  (1-q^{2k})(1-q^{2k+1})}.
$$
Once the non-negativity of the coefficients has been established, it
is not hard to prove that these series are  $\ns$-rational. This
raises the question of finding a family of combinatorial objects that
they count.
%%%%%%%%%%%%%%%%%%%%%%%%%%%%%%%%%%%%%%%%%%%%%%%%%%%%%%%%%%
\section{Algebraic generating functions}
\label{sec:alg}
\subsection{Definitions and properties}
The Laurent series $A(t)$ with coefficients in the field $R$ is said
to be  \emm algebraic,  (over $R(t)$) if it
satisfies a non-trivial \emm algebraic, equation:
$$
P(t,A(t))=0
$$
where $P$ is a bivariate polynomial with coefficients in $R$.

We assume below that $R=\qs$. 
Again, the set of algebraic  Laurent series possesses numerous interesting
properties~\cite[Ch.~6]{stanley-vol2},~\cite[Ch.~VII]{fla-sedg2}.
It is closed under sum, product, derivation,
reciprocals, but not under integration. These closure properties
become effective using 
either the theory of elimination or Gr\"obner bases, which are
implemented in most computer algebra packages. The
coefficients $a_n$ of an algebraic series $A(t)$ satisfy a linear
recurrence relation with polynomial coefficients: for $n$ large enough,
$$
p_0(n)a_n+p_1(n)a_{n-1}+p_2(n) a_{n-2} + \cdots + p_k(n)a_{n-k}=0.
$$
Thus the first $n$ coefficients can be computed using a linear
number of operations. 

There is no systematic way to express the coefficients of an algebraic
series in closed form. Still,  one can sometimes apply the
 \emm Lagrange inversion formula,:
\begin{proposition}
  Let $\Phi$ and $\Psi$ 
be two \fps\ and let  $U\equiv U(t)$ be the unique \fps\ with no
constant term satisfying 
$$
U=t\Phi(U).
$$
Then for $n>0$, the coefficient of $t^n$ in $\Psi(U)$ is:
$$[t^n]\Psi(U) = \frac 1{n} [t^{n-1}] \left(
\Psi'(t)\Phi(t)^n\right).$$
\end{proposition}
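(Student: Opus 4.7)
The plan is to work in the field $\qs((t))$ of formal Laurent series and exploit the formal residue calculus. First I would observe that the statement implicitly requires $\Phi(0) \neq 0$, since otherwise $U = 0$ is the only solution of $U = t\Phi(U)$ with no constant term. Under this hypothesis $U$ has valuation exactly $1$ in $t$, so $t$ can in turn be expressed as a power series in $U$, namely $t = U/\Phi(U)$, and the substitution $t \leftrightarrow U$ induces an isomorphism of Laurent series fields.

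The core technical fact is the elementary identity $[t^{-1}]F'(t) = 0$ for any formal Laurent series $F$: the derivative of $t^k$ equals $k t^{k-1}$, which contributes to the $t^{-1}$ coefficient only when $k = 0$, where the factor $k$ kills it. This in turn gives the formal integration-by-parts rule $[t^{-1}](FG') = -[t^{-1}](F'G)$ for any Laurent series $F,G$.

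Applying this rule to the product $t^{-n}\Psi(U)$, I obtain
\[
0 = [t^{-1}]\bigl(t^{-n}\Psi(U)\bigr)' = -n\,[t^{-1}]\bigl(t^{-n-1}\Psi(U)\bigr) + [t^{-1}]\bigl(t^{-n}\Psi'(U)\,U'(t)\bigr),
\]
and since $[t^{-1}]\bigl(t^{-n-1}\Psi(U)\bigr) = [t^n]\Psi(U)$, this rearranges to $n\,[t^n]\Psi(U) = [t^{-1}]\bigl(t^{-n}\Psi'(U)\,U'(t)\bigr)$. Next, I would change variables from $t$ to $U$ using the general identity $[t^{-1}]\bigl(f(U(t))\,U'(t)\bigr) = [U^{-1}]f(U)$, applied to $f(U) = t(U)^{-n}\Psi'(U) = \Phi(U)^n U^{-n}\Psi'(U)$. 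This yields
\[
n\,[t^n]\Psi(U) = [U^{-1}]\bigl(\Phi(U)^n U^{-n}\Psi'(U)\bigr) = [U^{n-1}]\bigl(\Psi'(U)\Phi(U)^n\bigr),
\]
which is the desired formula after renaming the dummy variable.

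The main obstacle is the rigorous verification of the change-of-variables identity $[t^{-1}]\bigl(f(U(t))U'(t)\bigr) = [U^{-1}]f(U)$ in the Laurent series setting. By linearity it suffices to check it on monomials $f(U) = U^k$: for $k \neq -1$ the integrand is the exact derivative $\bigl(\tfrac{1}{k+1}U^{k+1}\bigr)'$ and has zero residue by the key lemma, while for $k = -1$ one computes $U'(t)/U(t)$ directly and uses $U(t) = \Phi(0)\,t + O(t^2)$ (with $\Phi(0)\neq 0$) to obtain residue $1 = [U^{-1}]U^{-1}$.
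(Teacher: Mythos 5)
Your proof is correct, but there is nothing in the paper to compare it with: the proposition is the classical Lagrange inversion formula, which the paper states as a known tool and does not prove. Your residue-calculus argument is the standard complete proof of it, and all the steps check out: the key lemma $[t^{-1}]F'=0$, the integration by parts applied to $t^{-n}\Psi(U)$, and the change-of-variables identity $[t^{-1}]\bigl(f(U)U'\bigr)=[w^{-1}]f(w)$ verified on monomials (where the case $k=-1$ correctly uses that $U$ has valuation exactly one, i.e.\ $\Phi(0)\neq 0$). Two small polish points. First, when $\Phi(0)=0$ the statement is not so much ``implicitly excluded'' as trivially true: then $U=0$, so the left-hand side vanishes for $n>0$, while $\Phi(t)^n$ has valuation at least $n$, so the right-hand side vanishes as well; it is cleaner to dispose of this case explicitly and then assume $\Phi(0)\neq0$. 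Second, when you invoke linearity to reduce the change-of-variables identity to monomials $U^k$, you are summing over the infinitely many terms of $f(w)=\Phi(w)^n w^{-n}\Psi'(w)$; this is harmless because $U^kU'$ has valuation $k$, so only the finitely many terms with $-n\le k\le -1$ can contribute to the coefficient of $t^{-1}$, but a fully rigorous write-up should say so.
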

%
%For a generalization to equations $U=\Phi(t,U)$, see
Given an algebraic  equation $P(t,A(t))=0$, one can decide whether
there exists a series $U(t)$ and  two \emm rational, series $\Phi$ and
$\Psi$ satisfying 
\beq\label{lagrange-eq}
U=t\Phi(U) \quad \hbox{ and } \quad A=\Psi(U).
\eeq
Indeed, such series
exist if and only if the \emm genus, of the curve $P(t,a)$ is
zero~\cite[Ch.~15]{abhyankar}. Moreover, both the genus and a
parametrization of the curve in 
the form~\eqref{lagrange-eq} can be determined algorithmically. 

\begin{example}[{\bf Finding a rational parametrization}]
The following algebraic equation was recently
obtained~\cite{mbm-jehanne}, after a highly 
non-combinatorial derivation, for the GF of  certain planar
graphs carrying a \emm hard-particle configuration,:
%$$
\begin{multline}
%\beq
%\label{hard-part}
0=23328\,t^6A^4
+27\,t^4 \left( 91-2088\,t \right) A^3+t^2 \left( 86-3951\,t+46710\,t^2+3456\,t^3
 \right) A^2\\
+ \left( 1-69\,t+1598\,{t}^{2}-11743\,{t}^{3}-
14544\,{t}^{4} \right) A-1+66\,t-1495\,{t}^{2}+11485\,{t}^{3}+
128\,{t}^{4}.
\label{hard-part}
\end{multline}
 The package {\tt algcurves} of {\sc Maple}, and more precisely
the commands {\tt genus} and  {\tt parametrization}, reveal that a
 rational parametrization is obtained by setting
$$
t= -3\,{\frac { \left( 3\,U+7 \right)  \left( 9\,{U}^{2}+33\,U+37
 \right) }{ \left( 3\,U+1 \right) ^{4}}}.
$$
Of course, this is just the net result of {\sc Maple}, which is not
necessarily very meaningful for combinatorics. Still,
starting from this parametrization, one obtains after a few attempts 
an alternative parametrizing series $V$ with positive coefficients:
\beq\label{eq:param}
V=\frac t{(1-2V)(1-3V+3V^2)}.
\eeq
 The main interest of such a parametrization for this problem does
\emm not, lie 
in the possibility of applying the Lagrange inversion formula. Rather,
it suggests that a more combinatorial approach exists, based on the
enumeration of certain \emm trees,, in the vein
%of~\cite{mbm-schaeffer-ising,bdg-pdures,bdg-pdures-bip}. 
of~\cite{mbm-schaeffer-ising,bdg-pdures-bip}. It also 
gives a hint of what these trees may look like.
\end{example}

Another convenient tool borrowed from the theory of algebraic curves
 is the possibility to explore all branches of the curve $P(t,A(t))=0$
 in the neighbourhood of a given point $t_0$. This is based on Newton's
 polygon method. All branches have a \emm Puiseux expansion,, that is,
 an expansion of the  form:
$$
A(t) = \sum_{n\ge n_0} a_n (t-t_0)^{n/d}
$$
with $n_0 \in \zs$, $d\in \ps$. The coefficients $a_n$ belong to $\cs$
(in general, to an algebraic closure of the ground field). These
expansions can be computed automatically using standard software.  For
instance, the {\sc Maple} command {\tt puiseux} of the {\tt
  algcurves} package tells us that~\eqref{hard-part} has a unique solution that
is a \fps, the other three solutions starting with a term $t^{-2}$. 

Such Puiseux expansions are crucial for studying the \emm asymptotic
 behaviour, of the coefficients of an algebraic series $A(t)$. As in
 the rational case, one has  first
 to locate the singularities of $A(t)$, considered as a function of a
 complex variable $t$. These singularities are found among the roots of
 the \emm discriminant, and of the leading coefficient of $P(t,a)$
 (seen as a polynomial in $a$). The singular expansion of $A(t)$ near its
 singularities of smallest modulus can then be converted, using
 certain \emm transfer theorems,, into an asymptotic expansion of the
 coefficients~\cite[VII.4]{flajolet-odlyzko,fla-sedg2}. 

\begin{example}[{\bf Asymptotics of the coefficients of an algebraic series}]
Consider the series $V(t)$ defined by~\eqref{eq:param}. Its
 singularities lie among the roots of the discriminant
$$
\Delta(t)=-3+114t-4635t^2+55296t^3.
$$
Only one root is real. Denote it $t_0\sim 0.065$. The modulus of the
other two roots is smaller than $t_0$, so they could, in theory, be
candidates for singularities. However, $V(t)$ has non-negative
coefficients, and this implies, by Pringsheim's theorem, that one of
the roots of minimal modulus is real and positive. Hence $V(t)$ has a
unique singularity, lying at $t_0$. A Puiseux expansion at this point
gives
$$
V(t)= c_0- c_1\sqrt{1-t/t_0} + O(t-t_0),
$$
 for some explicit (positive) algebraic numbers $c_0$ and $c_1$, which
 translates  into 
$$
[t^n]V(t)= \frac{c_1}{2\sqrt \pi} t_0^{-n} n^{-3/2} \left(1+ o(1)\right).
$$
\end{example}
The determination of asymptotic expansions for the coefficients of
 algebraic series is probably not far from being completely automated,
 at least in the case of series with non-negative
 coefficients~\cite{chabaud,fla-sedg2}. The ``typical'' behaviour is % now
\beq\label{asympt-alg}
a_n\sim \frac \kappa{\Gamma(d+1)} \rho ^{-n} {n^d},
\eeq
where $\kappa$ is an algebraic number and $d\in\qs\setminus\{-1,-2,-3
, \ldots\}$. Compare with the result~\eqref{asympt-rat} obtained for rational
series. Again,  the above statement is not exact, as the contribution
of \emm all, dominant singularities must be taken into
account. See~\cite[Thm.~VII.6]{fla-sedg2} for a complete statement.

\smallskip
Let us add that, again, one can guess if a series $A(t)$ given by its
first coefficients satisfies an algebraic equation $P(t,A(t))=0$ of a
given bi-degree $(d,e)$.  The guessing procedure requires to know at
least $(d+1)(e+1)$ coefficients, and amounts to solving a system of
linear equations. It is implemented in the package {\tt Gfun} of {\sc
  Maple}~\cite{gfun}.   For instance, given the 10 first coefficients of the series
$V(t)$ satisfying $V(0)=0$ and~\eqref{eq:param},
one automatically conjectures~\eqref{eq:param}.
%$$V=\frac t{(1-2V)(1-3V+3V^2)}.$$
%Note that guessing this equation only requires to know $V(t)$ up to
%the coefficient of $t^9$. But of course a few more coefficients are
%welcome to confirm the conjecture.

%%%%%%%%%%%%%%%%%%%%%%%%%%%%%%%%%%%%%%%%%%%%%%%%%%%%%%%%%%%%%%%%ù
\subsection{Plane trees}\label{sec:trees}
Our typical ``algebraic'' objects will be (plane) trees.
Let us begin with their usual intuitive recursive definition.
%  of (plane) trees.  
A \emm  tree, is a graph formed of a distinguished vertex
(called the \emm root,) to 
which are attached a certain number (possibly zero) of trees,
ordered from left to right. The number of these trees is the \emm
degree, of the root. The roots of these trees are the \emm
children, of the root. A more rigorous definition describes a tree
as a finite set of words on the alphabet $\ps$ satisfying certain
conditions~\cite{neveu}. We hope that our less formal definition and
Fig.~\ref{fig:tree-map}(a) suffice to understand what we mean. The
vertices of a tree are often called \emm nodes,. Nodes of 
degree 0 are called \emm leaves,, the others are called \emm inner, nodes.

\begin{figure}[t]
\begin{center}
\input{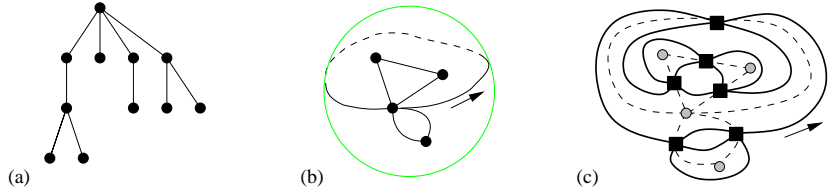}
\end{center}
\vskip -5mm\caption{(a) A plane tree. (b) A rooted planar map. (c) The
  corresponding 4-valent map (thick lines).}
\label{fig:tree-map}
\end{figure}

The enumeration of classes of trees  yields very often
algebraic equations. Let us consider  for instance the \emm
complete binary trees,, that is, the trees in which all vertices have
degree 0 or 2 (Fig.~\ref{fig-JFmap}). Let $a_n$ be the number of such trees having $n$
leaves. Then, by looking at the two (sub)trees of the root, one gets,
for $n>1$: 
$$a_n= \sum_{k=1}^{n-1} a_k a_{n-k}.
$$
The initial condition is $a_1=1$. In terms of GFs, this gives
$A(t)=t+A(t)^2,
$
which is easily solved:
\beq\label{eq:binary}
A(t)= \frac{1-\sqrt{1-4t}}2=\sum_{n\ge 0}\frac 1{n+1}{{2n}\choose n}
t^{n+1}.
\eeq

More generally, many algebraic series obtained in enumeration
are given as the first component of the solution of a  system of the form
\beq\label{system}
A_i= P_i(t, A_1, \ldots , A_k),
\eeq
for some polynomials $P_i(t,x_1, \ldots , x_k)$ having coefficients in
$\zs$. This system is 
said to be \emm proper, if $P_i$ has no constant term $(P_i(0,\ldots ,
0)=0)$ and does not contain any linear term $x_i$. It is \emm
positive, if the coefficients of the $P_i$ are non-negative. For instance,
$$
A_1=t^2+ A_1A_2 \quad \hbox{ and }\quad A_2=2tA_1^3
$$
is a proper positive system. The system is \emm quadratic, if every
 $P_i(t,x_1, \ldots , x_k)$ is a linear combination
% , with integer coefficients,
 of the monomials $t$ and $x_\ell x_m$, for $1\le \ell\le m \le k$.
\begin{theorem}[\cite{stanley-vol2}, Thm. 6.6.10 and~\cite{salomaa},
    Thm. IV.2.2] \label{thm:system}
  A proper algebraic system has a unique solution $(A_1, \ldots ,
  A_k)$ in the set of \fps\ in $t$ with no constant term. This
  solution is called the \emm canonical, solution of the system. The
  series $A_1$ is also the first component of the solution of 

-- a proper quadratic  system,

-- a proper  system
  of the form $B_i=tQ_i(t,B_1, \ldots, B_\ell)$, for $1\le i\le \ell$.

\noindent These two systems can be chosen to be 
positive if the original system is positive. 
\end{theorem}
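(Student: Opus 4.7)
The plan is to handle the three assertions in sequence, leveraging the fact that properness forces convergence of the natural Picard iteration in the $t$-adic topology on $(t\qs[[t]])^k$.

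For existence and uniqueness of the canonical solution, I would set $\vec A^{(0)}=\vec 0$ and iterate $\vec A^{(n+1)}=(P_1(t,\vec A^{(n)}),\ldots,P_k(t,\vec A^{(n)}))$. Let $v(\cdot)$ denote the $t$-adic valuation, and observe that every monomial $c\,t^a x_{i_1}\cdots x_{i_m}$ of $P_i$ satisfies $a\ge 1$ when $m=0$ and $a+m\ge 2$ when $m\ge 1$, since $P_i$ has no constant term and no scalar-linear term. A telescoping of $A_{i_1}\cdots A_{i_m}-A'_{i_1}\cdots A'_{i_m}$ then yields the contraction estimate $v(P_i(t,\vec A)-P_i(t,\vec A'))\ge 1+\min_j v(A_j-A'_j)$ on $(t\qs[[t]])^k$, which forces $(\vec A^{(n)})_n$ to be Cauchy and its limit to be the unique fixed point. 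Non-negativity of the coefficients is preserved along the iteration.

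For the quadratic reduction, I would mimic the passage to Chomsky normal form. Introduce an auxiliary variable $T$ with equation $T=t$, derived auxiliaries $T_a$ satisfying $T_a=T\cdot T_{a-1}$ (with $T_1:=T$) encoding $t^a$, and, for every sub-product $x_{i_r}\cdots x_{i_m}$ of length at least two that appears inside some monomial of some $P_i$, a fresh variable computing that sub-product via a chain of binary relations. Then every monomial of every $P_i$ can be rewritten as a scalar times a single binary product of two such variables (for instance $c\,t^a x_{i_1}\cdots x_{i_m}$ with $a\ge 2$, $m\ge 2$ becomes $c\,x_{T_a}x_U$ where $U$ is the auxiliary for $x_{i_1}\cdots x_{i_m}$), giving an equivalent system whose right-hand sides are linear combinations of $t$ and products $x_\ell x_{m'}$. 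Properness is immediate, $A_1$ remains the first variable, and positivity is preserved since the auxiliary equations carry only coefficients equal to $1$.

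For the left-factored form, I would exploit that each canonical $A_i$ has no constant term, so $A_i=t\tilde A_i$ with $\tilde A_i\in\qs[[t]]$. The substitution $x_j\mapsto ty_j$ sends every proper monomial $c\,t^a x_{i_1}\cdots x_{i_m}$ to $c\,t^{a+m}y_{i_1}\cdots y_{i_m}$, and since $a+m\ge 1$ in every case, $P_i(t,t\tilde A_1,\ldots,t\tilde A_k)$ is divisible by $t$; dividing by $t$ produces a polynomial identity $\tilde A_i=\tilde P_i(t,\tilde A_1,\ldots,\tilde A_k)$. Only the $(a,m)=(1,0)$ monomials of $P_i$ survive the specialisation $t=0$, contributing the scalar $\alpha_i:=[t]A_i$. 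Writing $\tilde A_i=\alpha_i+C_i$ with $C_i(0)=0$, the identity becomes $C_i=\tilde P_i(t,\vec\alpha+\vec C)-\alpha_i$, whose right-hand side is divisible by $t$ (the $-\alpha_i$ cancels the constant term of $\tilde P_i(t,\vec\alpha+\vec C)$), and hence $C_i=tQ_i(t,\vec C)$ is a proper left-factored system in $C_1,\ldots,C_k$. To make $A_1$ itself the first component, I would prepend the fresh equation $B_1:=A_1=t(\alpha_1+C_1)$ and relabel $B_{i+1}:=C_i$, obtaining a proper system in $\ell=k+1$ variables of the required form. Positivity is inherited since the $\alpha_j$'s and all coefficients of the $Q_i$'s are non-negative polynomial expressions in the $\alpha_j$'s and coefficients of the $P_i$'s. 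The hard part, I expect, is precisely the bookkeeping of $(a,m)$-types in this last step, which is what guarantees that subtracting $\alpha_i$ from $\tilde P_i(t,\vec\alpha+\vec C)$ yields a series divisible by $t$ and that the resulting system is still proper.
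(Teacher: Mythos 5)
Your argument is correct, but it is not the paper's route. The paper proves only the last reduction, and does so on top of the cited quadratic normal form: each equation is taken as $A_i=m_it+n_iA_{\sigma(i)}A_{\tau(i)}$, the products are rewritten as $A_iA_j=tU_{ij}$, and the induced equations $U_{ij}=t\bigl(m_im_j+m_in_jU_{\sigma(j)\tau(j)}+m_jn_iU_{\sigma(i)\tau(i)}+n_in_jU_{\sigma(i)\tau(i)}U_{\sigma(j)\tau(j)}\bigr)$ are adjoined, so every right-hand side acquires a factor $t$; uniqueness of the canonical solution and the quadratic reduction itself are simply cited from Stanley and Salomaa. You instead prove all three assertions: your $t$-adic contraction argument for existence and uniqueness is the standard one, your Chomsky-normal-form quadratization is essentially the construction behind the cited references, and, most notably, you obtain the left-factored form $B_i=tQ_i(t,B_1,\ldots,B_\ell)$ directly from the general proper system via the substitution $A_i=t\tilde A_i$, division by $t$, and recentering at $\alpha_i=[t]A_i$, rather than passing through the quadratic form first. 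Each approach has its merits: yours makes the third item independent of the second, is self-contained, and tracks positivity and properness explicitly; the paper's $U_{ij}=A_iA_j/t$ trick is a two-line argument once the quadratic form is granted. One phrasing you should tighten: ``the $-\alpha_i$ cancels the constant term, hence the right-hand side is divisible by $t$'' is not a valid inference by itself (a monomial such as $C_1C_2$ has no constant term yet no factor $t$); what actually saves you is precisely the bookkeeping you flag, namely that properness forces $a+m\ge 2$ for every monomial of $P_i$ other than the pure $t$ term, so every non-constant monomial of $\tilde P_i$ already carries the factor $t^{a+m-1}$ with $a+m-1\ge 1$, and hence $\tilde P_i(t,\vec\alpha+\vec C)-\alpha_i$ is divisible by $t$ as claimed.
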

\begin{proof}
  Let us prove the last property, which we have not found in the above
  references. Assume $A_1$ satisfies~\eqref{system} and that this
  system is quadratic. The $i$th equation reads $A_i= m_it+n_i
  A_{\sigma(i)}A_{\tau(i)}  $. Rewrite each monomial $A_iA_j$
  %occurring in the system 
as  $tU_{ij}$   and add  the   equations 
$U_{ij }= t\left(m_im_j+m_in_jU_{\sigma(j)\tau(j)}
  +m_jn_iU_{\sigma(i)\tau(i)}
+ n_in_jU_{\sigma(i)\tau(i)} U_{\sigma(j)\tau(j)}\right)$. The new system
  has the required properties.
\end{proof}
\begin{definition}\label{def:N-algebraic}
  A series $A(t)$ is $\ns$-\emm algebraic, if it has coefficients in
  $\ns$ and if $A(t)-A(0)$ is the first component of the solution of a
  proper positive system.
\end{definition}
Proper positive  systems like~\eqref{system} can always be
given a combinatorial interpretation in terms of trees. Every vertex
of these trees carries a label $(i,c)$ where $i\in \llbracket k
\rrbracket$ and $c\in \ps$. We say that $i$ is the \emm type, of the
vertex and that $c$ is its \emm colour,. The \emm type, of a  tree is
the type of its root. Write $A_0=t$, so that $A_i=P_i(A_0,
A_1, \ldots , A_k)$. Let ${\cal A}_0$ be the set reduced to the 
tree with one node, labelled $(0,1)$. For $i\in \llbracket k
\rrbracket$, let ${\cal A}_i$ be the set of trees such that

-- the root has type $i$,

-- the types of the subtrees of the root, visited from left to right,
   are $0,\ldots , 0, 1, \ldots,$
$ 1,  \ldots , k, \ldots ,k$, in this    order,

-- if exactly $e_j$ children of the root have type $j$, the colour of the root is
   any integer in the interval $[1, m]$, where $m$ is the coefficient
   of $x_0^{e_0}\cdots x_k^{e_k}$ in $P_i(x_0, \ldots , x_k)$. 

Then it is not hard to see that $A_i(t)$ is the \gf\ of trees of type
$i$, counted by the number of leaves. This explains why trees will be,
in the rest of this paper, our typical ``algebraic'' objects.
%%%%%%%%%%%%%%%%%%%%%%%%%%%%%%%%%%%%%%%%%%%%%%%%%%%%%%%%%
\subsection{Context-free languages}
As in the case of rational (and, more precisely, $\ns$-rational)
series,  there exists a family of languages that is closely
related to algebraic series. % context-free languages.
A \emm context-free grammar, $G$ consists of

-- a set ${\cal S}= \{S_1, \ldots , S_k\}$ of \emm symbols,, with one
   distinguished symbol, say, $S_1$,

-- a finite alphabet $\cal A$ of \emm letters,, disjoint from ${\cal S}$,

-- a set of \emm rewriting rules, of the form $S_i \rightarrow w$
where $w$ is a non-empty word on the alphabet ${\cal S}\cup \cal A$.
%not reduced to a single letter $S_i$.

The grammar is \emm  proper, if there is no rule $S_i \rightarrow S_j$.
The language ${\cal L}(G)$ generated by $G$ is the set of words \emm
on the alphabet, 
$\cal A$ that can be obtained from $S_1$ by applying iteratively the
rewriting rules. A language is \emm context-free, is there exists a
context-free grammar that generates it. In this case there exists also
a proper context-free grammar that generates it.

\begin{example}[{\bf Dyck words}]
 Consider the grammar $G$ having only one
symbol, $S$, alphabet $\{a,b\}$, and rules $S\rightarrow ab+ abS+ aSb+
aSbS$ (which is  short for 
$
S\rightarrow ab,  \quad S\rightarrow abS, \quad S\rightarrow aSb,
\quad S\rightarrow aSbS
$).
It is easy to see that ${\cal L}(G)$ is the set of non-empty words $u$
on
$\{a,b\}$ such that $|u|_a=|u|_b$ and for every prefix $v$ of $u$,
$|v|_a\ge|v|_b$. These words, called \emm Dyck words,,
%, have an even length,  and
 provide a simple encoding of the Dyck paths met in Example~\ref{ex:area}.
\end{example}

A \emm derivation tree, associated with $G$ is a plane tree in which
all inner nodes are labelled by symbols, and all leaves by letters, in
such a way that if a node is labelled $S_i$ and its children $w_1,
\ldots , w_k$ (from left to right), then the rewriting rule $S_i
\rightarrow w_1 \cdots w_k$ is in the grammar. If the root is labelled
$S_1$, then the word obtained
by reading the labels of the leaves in prefix order (\emm i.e.,, from
left to right) belongs to the
language generated by $G$. Conversely, 
% all derivation trees with root $S_1$ describe a possible iterated
% application of rewriting rules. Hence 
for every word
$w$ in ${\cal L}(G)$, there exists at least one derivation tree with
root labelled $S_1$ that
gives $w$. The grammar is said to be \emm unambiguous, if every word
of ${\cal L}(G)$ admits a unique derivation tree.

Assume $G$ is proper. 
For $1\le i\le k$, let $A_i(t)$ be the \gf\ of derivation trees rooted
at $S_i$, counted by the number of leaves. With each rule $r$,
associate the monomial $M(r)=x_0^{e_0}\cdots x_k^{e_k}$ where $e_0$
[resp. $e_i$, with $i>0$]  is
the number of letters of $\cal A$ [resp. occurrences of $S_i$] in the
right-hand side of $r$. Then the series $A_1, \ldots , A_k$ form the
canonical solution of the proper positive system~\eqref{system},
with
$$
P_i(x_0,x_1, \ldots , x_k) = \sum_r M(r),
$$
where the sum runs over all rules $r$ with left-hand side $S_i$.

Conversely, starting from a positive  system $B_i=tQ_i(t, B_1, \ldots,
B_k)$ and its canonical solution, it is
always possible to construct an unambiguous grammar with symbols $S_1,
\ldots , S_k$ such that $B_i$  is the \gf\ of derivation 
trees rooted at $S_i$ (the  idea is to introduce a new letter
$a_i$ for each occurrence of $t$). In view of Theorem~\ref{thm:system} and
Definition~\ref{def:N-algebraic}, this gives the following alternative
characterization of $\ns$-algebraic series:

\begin{proposition}
  A series $A(t)$ %with coefficients in $\ns$ 
is $\ns$-algebraic if
  and only if  only $A(0)\in\ns$ and there exists an unambiguous
  context-free language   having \gf\ $A(t)-A(0)$.
\end{proposition}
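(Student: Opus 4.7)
The plan is to prove both implications by shuttling between derivation trees of an unambiguous grammar and the canonical solution of a proper positive algebraic system, exploiting the dictionary set up in the two paragraphs immediately preceding the statement.

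For the direction $(\Leftarrow)$, I would take an unambiguous context-free language $L$ of generating function $A(t)-A(0)$ and, after a standard preliminary reduction preserving unambiguity (removal of $\epsilon$-productions and of unit rules $S_i\to S_j$), assume the generating grammar $G$ is proper. Associating to each rule a monomial as in the excerpt yields a proper positive system $A_i=P_i(t,A_1,\ldots,A_k)$ whose canonical solution is the vector of generating functions of derivation trees rooted at each symbol, counted by the number of leaves. Since $G$ is unambiguous, the word of $L$ produced by a derivation tree has length equal to the number of leaves of that tree, and distinct trees produce distinct words. Hence the first component of the canonical solution is exactly $A(t)-A(0)$, and together with $A(0)\in\ns$ (so that $A(t)$ has coefficients in $\ns$) this gives that $A(t)$ is $\ns$-algebraic by Definition~\ref{def:N-algebraic}.

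For the direction $(\Rightarrow)$, assuming $A(t)$ is $\ns$-algebraic, Definition~\ref{def:N-algebraic} gives $A(0)\in\ns$ and a proper positive system of which $A(t)-A(0)$ is the first component. The last clause of Theorem~\ref{thm:system} replaces this system by one of the shape $B_i=tQ_i(t,B_1,\ldots,B_\ell)$, still with $B_1=A(t)-A(0)$ and with nonnegative integer coefficients. I would then build a grammar with symbols $S_1,\ldots,S_\ell$ and start symbol $S_1$ as follows: for each monomial $c\cdot t^{a_0}x_1^{a_1}\cdots x_\ell^{a_\ell}$ (necessarily with $a_0\geq 1$) appearing in $tQ_i(t,x_1,\ldots,x_\ell)$, create $c$ rules $S_i\to w$, each right-hand side $w$ being a word that contains $a_0$ terminal letters and $a_j$ occurrences of $S_j$ for $j\geq 1$, placed in some fixed order. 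The essential device is to use a \emph{brand-new terminal letter} at every single position of every rule, drawn from a sufficiently large alphabet so no letter is ever re-used.

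With this labelling, each terminal in a word produced by a derivation tree determines unambiguously which rule and which position of that rule generated it, so the derivation tree can be recovered from the word; hence the grammar is unambiguous and the length of the produced word equals the number of leaves of the tree. By the rule-to-monomial dictionary, the generating function of words produced from $S_i$ equals the $i$th component of the canonical solution of the system $B_i=tQ_i(t,B_1,\ldots,B_\ell)$, which for $i=1$ is $A(t)-A(0)$. The step that will require the most care is spelling out the grammar construction precisely enough to verify unambiguity rigorously; this is exactly what the use of pairwise distinct fresh terminals secures, and is in my view the only real obstacle in the argument.
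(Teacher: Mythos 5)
Your $(\Leftarrow)$ direction is fine and coincides with the paper's argument (derivation trees of a proper unambiguous grammar, counted by leaves, satisfy the proper positive system built from the rule monomials). The genuine gap is in the $(\Rightarrow)$ direction, at precisely the point you flag as "the only real obstacle": using a brand-new terminal at every position of every rule, with the symbols "placed in some fixed order", does \emph{not} secure unambiguity, and the justification you give is not valid. Knowing, for each letter occurrence of a word, which rule and which position produced it does not determine the derivation tree, because a single rule is in general applied many times and nothing tells you how the occurrences are grouped into applications. Concretely, take the proper positive system $B=t+tB^2$ and choose the order $S\rightarrow S\,a\,S$, $S\rightarrow b$ (all terminals fresh, one per position). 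The word $babab$ has two derivation trees (first $S$ giving $bab$ and second giving $b$, or vice versa), so the grammar is ambiguous; accordingly the generating function of the language it generates is $t/(1-t^2)$, not the canonical solution $B(t)=\bigl(1-\sqrt{1-4t^2}\,\bigr)/(2t)$. Thus for an arbitrary fixed order both of your conclusions (unambiguity, and equality of the language's GF with the first component of the system) can fail.

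The repair is exactly what the paper's parenthetical hint ("introduce a new letter $a_i$ for each occurrence of $t$") is implicitly exploiting: since every monomial of $tQ_i$ contains a factor $t$, you may place one of the fresh terminals at the \emph{first} position of each right-hand side. Every production then reads $S_i\rightarrow a\,\alpha$ with all these initial letters pairwise distinct, so for each pair (nonterminal, first letter) at most one production applies; by induction on the length of the word, leftmost derivations are deterministic, hence each word has a unique derivation tree. This yields unambiguity and the bijection between words and derivation trees, so the language's generating function equals $B_1=A(t)-A(0)$, completing the direction. With that modification (and, in the $(\Leftarrow)$ direction, a word justifying that unit-rule elimination preserves unambiguity, which is standard), your argument matches the paper's.
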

%%%%%%%%%%%%%%%%%%%%%%%%%%%%%%%%%%%%%%%%%%%%%%%%%%%%%%%%%%%%%%%%ù
\subsection{The combinatorial intuition of algebraic \gfs}
We have described two families of combinatorial objects that
naturally yield algebraic GFs: plane trees and words of
unambiguous context-free languages. We have, moreover, shown a close
relationship between  these two 
types of objects.  These two families
convey the standard intuition of what a family with an algebraic \gf\
looks like: the algebraicity suggests that it \emm may, (or should...)
be possible to give a recursive 
description of the objects based on disjoint union of sets and
concatenation of objects. Underlying such a description is a
context-free grammar. This intuition is the basis of the so-called
Sch\"utzenberger methodology, according to which the ``right''
combinatorial way of proving algebraicity is to describe a bijection
between the objects one counts and the words of an unambiguous
context-free language. This approach has led in the 80's and 90's
to numerous satisfactory explanations of the algebraicity of certain
series, and we describe some of them in this subsection.
Let us, however, warn the reader that the similarities with the
rational case will stop here. Indeed, it seems that the ``context-free''
intuition is far from explaining all algebraicity phenomena in
enumerative combinatorics. In particular,
\begin{itemize}
  \item[$(i)$] it is very likely that many families of objects have an
  algebraic, but not $\ns$-algebraic  \gf ,
\item[$(ii)$] there are many  families of combinatorial
  objects with an algebraic GF that do not exhibit a clear
  ``context-free'' structure, based on union and concatenation. For
  several of these families, there is just no explanation of this type,
  be it clear or not.
\end{itemize}
 This will be discussed in the next subsections.
For the moment, let us illustrate the ``context-free'' intuition.

\subsubsection{Walks on a line}\label{sec:line}
Let $\cS$ be a finite subset of $\zs$.  Let $\cW$ be the set of walks
on the line $\zs$ that start from $0$ and take their steps in
$\cS$. The \emm length, of a walk is its number of steps. Let $\cW_k$
be the set of walks ending at position $k$. For $k\ge 0$,  let $\cM_k$
be the subset of  $\cW_k$ consisting of walks that never visit a
negative position, and let $\cM$ be the union of the sets $\cM_k$. In
probabilistic terms, the walks in $\cM$ would be called \emm meanders,
and the walks of $\cM_0$ \emm excursions., Of course, a walk is simply
a sequence of steps, hence a word on the alphabet $\cS$. Thus the sets
of walks we have defined can be considered as languages on this
alphabet.

\begin{theorem}\label{meanders}
  The language $\cW$ is simply $\cS^*$ and is thus regular.  The
  languages $\cM$, $\cW_k$ and $\cM_k$ are unambiguous context-free
   for all $k$. 
\end{theorem}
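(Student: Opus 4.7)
The first claim is immediate: any sequence of steps from $\cS$ is a valid walk from $0$ on $\zs$, so $\cW=\cS^*$, which is regular (a one-state automaton with $|\cS|$ self-loops recognises it). For the remaining languages I would construct unambiguous context-free grammars using the classical \emph{arch decomposition} technique.

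The master case is $\cM_0$. Every non-empty excursion factors uniquely as $uv$ where $u$ is a \emph{primitive} excursion (visiting $0$ only at its endpoints) and $v\in\cM_0$; this gives an unambiguous Kleene star $\cM_0=\mathcal{P}^*$, with $\mathcal{P}$ the set of primitive excursions. A primitive excursion has the form $a\cdot w\cdot b$ with $a,b\in\cS$ of opposite signs and $w$ a walk from height $a$ to height $-b$ that stays strictly positive. Writing $c=\max\cS$ and $d=-\min\cS$, the heights $a$ and $-b$ are at most $\max(c,d)$, which motivates introducing the finite auxiliary family
\[
\cB_{i,j}\;=\;\{\text{walks from height }i\text{ to height }j\text{ that stay }\ge 0\},\qquad 0\le i\le c,\ 0\le j\le d.
\]
I would then express every $\cB_{i,j}$ with $i\ge 1$ in terms of $\cB_{0,\cdot}$ and of $\cB_{i',j}$ with $i'<i$, by decomposing each walk at its first descent below height $i$ (if any): the prefix, shifted down by $i$, identifies with a meander in $\cB_{0,h}$ for an appropriate $h$, while the suffix lies in $\cB_{i',j}$. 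The recursion on $i$ bottoms out at $i=0$, where $\cB_{0,j}=\cM_j$ is reconstructed from $\mathcal{P}^*$ followed by an ascending factor obtained by a first-passage-to-height-$j$ decomposition. All these factorisations are canonical, so the induced grammar is unambiguous; this settles $\cM_0$, every $\cM_k$, and, via a top-level union, $\cM$.

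For $\cW_k$ I would use the complementary \emph{decomposition at the minimum}: every walk from $0$ to $k$ first attains its minimum $m\le 0$ at a unique position, factorising canonically as $w_1\cdot s\cdot w_2$, where $s\in\cS$ is the step realising the descent to $m$; after the appropriate reversal and shifts, $w_1$ becomes a meander-like object and $w_2$ becomes a meander ending at height $k-m$. Summing over the finitely many possible values of $s$ and of the offset $m$ (bounded in terms of $d$) yields a grammar for $\cW_k$ built on top of the meander grammar.

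The main obstacle is the bookkeeping when $\cS$ contains large steps: a first descent below height $i$ can land on any of the heights $i-1,i-2,\ldots,\max(0,i-d)$, while the portion of the walk \emph{above} $i$ may reach arbitrarily large heights. This is precisely why the auxiliary family must be indexed by a full window $[0,c]\times[0,d]$ rather than by a single height, and why verifying that the mutually recursive system for the $\cB_{i,j}$ and the $\cM_j$ is finite, complete, and unambiguous requires a careful (if routine) case analysis.
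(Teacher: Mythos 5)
Your handling of $\cW$ and of the meander languages is essentially sound, and in fact more ambitious than the paper's own proof, which writes out a grammar only in the representative case $\cS=\{+1,-1\}$ (first-return decomposition for $\cM_0$, last visit to $0$ for $\cM_k$, first step for $\cW_0$, first visit to $\pm 1$ for $\cW_{\pm k}$) and refers to the literature (Duchon, Labelle--Yeh, Labelle) for general step sets; your windowed family $\cB_{i,j}$ with the first-descent decomposition is in the spirit of those references. Even there a few repairs are needed: a primitive excursion may also be a single zero step; ``first passage to height $j$'' must be replaced by ``first passage weakly above $j$'', since a large positive step can overshoot $j$; $\cM_k$ for $k$ beyond the window needs one more recursion; and $\cM$ cannot be obtained as ``a top-level union'' of all the $\cM_k$, which is an infinite union inadmissible in a grammar -- it must be generated directly, as the paper does via $M\rightarrow M_0+(1+M_0)a(1+M)$.

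The genuine gap is in $\cW_k$. You cut a walk at the first attainment of its minimum $m$ and then propose to sum over ``the finitely many possible values of $s$ and of the offset $m$ (bounded in terms of $d$)''. But the minimum of a walk of $\cW_k$ is not bounded in terms of the step set: already for $\cS=\{+1,-1\}$ the word $b^na^{n+k}$ lies in $\cW_k$ and has minimum $-n$. In your factorisation $w_1\cdot s\cdot w_2$, the factor $w_2$ is a meander ending at height $k-m$ and the factor $w_1$ ends at height $m+|s|$; both of these heights are unbounded, and a context-free grammar has only finitely many nonterminals, so neither factor can be indexed by its ending height as you propose. The finite sum you describe therefore only generates the walks whose minimum lies in a bounded window, not all of $\cW_k$. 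The two unbounded depths are coupled (they differ by $|s|$), and the cure is to encode that coupling by a recursion rather than by a global cut at the minimum: introduce languages of walks constrained to stay weakly below a bounded level (the analogue of the paper's $\cM_0^-$) and peel off one layer of depth at a time, as in the paper's rules $W_0\rightarrow W_0^++W_0^-$, $W_0^+\rightarrow M_0(1+W_0^-)$, $W_0^-\rightarrow M_0^-(1+W_0^+)$ and $W_k\rightarrow(1+M_0^-)a\left(1_{k=1}+W_{k-1}\right)$ for $\cS=\{\pm 1\}$, or in the general constructions of the cited references. Until the grammar for $\cW_k$ is rebuilt along such lines, the last assertion of the theorem is not established by your argument.
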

\begin{proof}
We only describe the (very simple)  case $\cS=\{+1,-1\}$, to
illustrate the ideas that are involved in the construction of the
grammar. 
% This   case is actually much simpler than the general case. 
We encode the steps $+1$ by the letter $a$, the steps
  $-1$ by $b$, and introduce some auxiliary languages:
%\begin{itemize}  \item 

$\bullet$ $\cM_0^-$, the subset of $\cW_0$ formed of walks that
never visit a positive position,

%\item

$\bullet$  $\cW_0^+$ [resp.  $\cW_0^-$], the subset of $\cW_0$ formed of walks that start  with  $a$ [resp.  $b$].

%\end{itemize}

\noindent The language $\cM_0$ will be generated from the symbol
$M_0$, and similarly for the other languages.
By looking at the first time a walk of $\cM_0$ [resp. $\cM_0^-$]
reaches position $ 0$ after its first step, one obtains 
$$
M_0 \rightarrow a(1+M_0)b  (1+M_0) \quad \hbox{ and } \quad
M_0^- \rightarrow b(1+M_0^-)a   (1+M_0^-).
$$
 By considering the last visit to 0 of a walk of $\cM_k$, one obtains,
 for $k>0$: 
$$
M_k\rightarrow   (1+M_0) a \left( 1_{k=1} +M_{k-1}\right).
$$
This is easily adapted to general meanders:
$$
M\rightarrow  M_0+(1+M_0) a \left( 1 + M\right).
$$
Considering the first step of a walk of $\cW_0$ gives
$$ 
W_0\rightarrow W_0^++ W_0^- \quad \hbox{ with }
\quad W_0^+\rightarrow M_0(1+W_0^-) \quad \hbox{ and }  \quad
W_0^-\rightarrow M_0^-(1+W_0^+).
$$
Finally, for $k>0$, looking at the first visit at 1 [resp. $-1$]  of a
walk of $\cW_k$ [resp. $\cW_{-k}$] yields
$$
W_k\rightarrow(1+M_0^-)a \left( 1_{k=1} + W_{k-1}\right)  \quad\left[\hbox
  {resp.} \quad W_{-k}\rightarrow(1+M_0)b \left( 1_{k=1} + W_{-(k-1)}\right)\right].
$$

For a general set of steps $\cS$, various grammars have been described
for the languages $\cM_k$  
of meanders~\cite{duchon,labelle-yeh,labelle}. For $\cW_k$, we refer
to~\cite[Section 4]{labelle} where the (representative) case
$\cS=\{-2,-1,0,1,2\}$ is treated. 
\end{proof}

Theorem~\ref{meanders} is often described in terms of walks in $\zs^2$
starting from $(0,0)$ and taking their
steps in $\{(1,j), j \in \cS\}$. The conditions on the
positions of the walks that lead to the definition of $\cM_k$ and
$\cW_k$ are restated in terms of conditions on the \emm ordinates, of
the vertices visited by the walk. A harmless generalization is
obtained by taking steps 
in a finite subset $\cS$ of $\ps\times\zs$.
%, such that every element $(i,j)$  of $\cS$ satisfies $i>0$. 
A walk is still encoded by
a word on the alphabet $\cS$. The languages $\cW_k$ remain unambiguous
context-free. If each step $(i,j)$ is, moreover, weighted by
%  an indeterminate (or 
a rational number
 $w_{i,j}$, then the  \gf\ of
walks of $\cW$, counted by  the coordinates of their endpoint, is
$$
W(t,s)=\frac 1{1-\sum_{(i,j)\in \cS} w_{i,j}t^i s^j}.
$$
The \gf\ $W_k(t)$ that counts (weighted) walks ending at ordinate $k$ is the
coefficient of $s^k$ in $W(t,s)$. Since $\cW_k$ is unambiguous
 context-free, the series $W_k(t)$ is algebraic. % over $\qs(t)$. 
This
gives a combinatorial explanation of the following
result~\cite[Thm. 6.3.3]{stanley-vol2}. 

\begin{theorem}[{\bf Diagonals of rational series}] \label{thm:diag}
  Let $A(x,y)=\sum_{m,n\ge 0} a_{m,n}x^m y^n$ be a series in two
  variables $x$ and $y$, with   coefficients in $\qs$, that is
  rational.
Then the diagonal of $A$, that is, the series $\Delta A(t)=\sum_{n\ge 0}
  a_{n,n}t^n$, is algebraic.
\end{theorem}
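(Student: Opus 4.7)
The plan is to interpret the diagonal combinatorially as a walk-generating function and then to invoke the algebraicity of walks with steps in $\ps \times \zs$ established just before the theorem.

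First I would reduce to the case $A(x,y) = 1/Q(x,y)$ with $Q$ a polynomial normalized so that $Q(0,0) = 1$. Writing $Q = 1 - R$ where $R$ has no constant term, the expansion
$$
\frac{1}{Q(x,y)} = \sum_{k \ge 0} R(x,y)^k
$$
is the weighted generating function of walks in $\ns^2$ starting at the origin whose steps are the monomials $x^i y^j$ of $R$ (so $(i,j) \in \ns^2 \setminus \{(0,0)\}$), each carrying weight $r_{i,j}$. In particular $a_{n,n}$ is the total weight of such walks that end at $(n,n)$.

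Next I would apply the linear change of coordinates $(i,j) \mapsto (i+j,\,i-j)$. Since every step of $R$ has $i+j \ge 1$, the transformed walks take their steps in $\ps \times \zs$, exactly the framework treated just before the theorem. Tracking the new first coordinate (``time'') by $u$ and the new second coordinate (``altitude'') by $s$, the endpoint generating function is the rational series
$$
W(u,s) = \frac{1}{1 - \sum_{(i,j)} r_{i,j}\, u^{i+j}\, s^{i-j}},
$$
and for every $k \in \zs$ the coefficient $W_k(u) := [s^k] W(u,s)$ is algebraic in $u$. A walk ends at $(n,n)$ in the original lattice exactly when it ends at altitude $0$ with time $2n$, so
$$
W_0(u) = \sum_{n \ge 0} a_{n,n}\, u^{2n} = \Delta A(u^2),
$$
and therefore $\Delta A(t)$ is algebraic over $\qs(t)$.

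To handle a nontrivial polynomial numerator $P(x,y) = \sum p_{a,b} x^a y^b$, I would expand $\Delta(P/Q)(t)$ as a finite $\qs$-linear combination of \emm shifted diagonals, $\sum_m b_{m-c,\,m}\, t^m$ of $1/Q$, where $b_{m,n} := [x^m y^n](1/Q)$ and $c = a-b$ ranges over a finite set of integers determined by $P$. The same walk argument identifies each shifted diagonal with $u^{c} W_{-c}(u)$ evaluated at $u = \sqrt{t}$, which is algebraic. The main obstacle is verifying this final substitution carefully: one needs to check that the step set really lands in $\ps \times \zs$ after the change of variables, and that a power series in $t$ which is algebraic over $\qs(\sqrt t)$ is in fact algebraic over $\qs(t)$ (which follows since $\qs(\sqrt t)$ is itself algebraic over $\qs(t)$).
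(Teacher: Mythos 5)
Your proof is correct and follows essentially the same route as the paper: your change of coordinates $(i,j)\mapsto(i+j,\,i-j)$ is exactly the paper's substitution $x=ts$, $y=t/s$, after which both arguments identify the diagonal with the algebraic series $W_k$ counting weighted walks with steps in $\ps\times\zs$ ending at a fixed altitude. The only cosmetic difference is that the paper disposes of the numerator up front ``by linearity'' (reducing to $A=x^ay^b/(1-\sum c_{m,n}x^my^n)$) rather than via your shifted diagonals, and it passes from $\Delta A(t^2)$ to $\Delta A(t)$ directly where you invoke algebraicity of $\qs(\sqrt t)$ over $\qs(t)$.
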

\begin{proof}
  By linearity, it suffices to consider the case
$$
A(x,y)= \frac{x^ay^b}{1-\sum_{0\le m,n \le d} c_{m,n} x^m y^n},
$$
with $c_{0,0}=0$. Set $x=ts$ and $y=t/s$. The diagonal of $A$
satisfies
$$
\Delta A(t^2)= [s^0] A(ts,t/s)=
t^{a+b} [s^{b-a}] \frac 1 {1-\sum_{0\le m,n \le d}
  c_{m,n} t^{m+n} s^{m-n}},
$$
which is algebraic as it counts weighted paths in $\cW_{b-a}$, for a
certain set of steps. Hence $\Delta A(t)$ is algebraic too.
\end{proof}

 The converse of Theorem~\ref{thm:diag} holds: every series
$B(t)$ that is algebraic over $\qs(t)$ is the diagonal of a bivariate
rational series $A(x,t)$~\cite{safonov}. 

\smallskip
\noindent{\bf Note.} If one is simply interested in obtaining a set of
algebraic equations defining the GFs of the sets $\cM_k$ and $\cW_k$,
a more straightforward approach is to use a partial fraction
decomposition (for $\cW_k$) and the kernel method (for
$\cM_k$). See~\cite[6.3]{stanley-vol2}, and \cite[Example 3]{bousquet-petkovsek-1}.

\subsubsection{Directed animals}\label{sec:diriges}
Let us move to an example where a neat context-free exists, but is
uneasy to discover. We return to the \emm directed animals, defined in
Section~\ref{sec:predicting}. As discussed there, there is no simple
explanation as to why the number of compact-source animals is so
simple (Theorem~\ref{thm:compact}). Still, there is a convincing
explanation for the \emm algebraicity, of the corresponding series:
directed animals have, indeed, a context-free structure.
This structure was discovered  a few years after the proof of
Theorem~\ref{thm:compact}, with 
the development by Viennot of the theory of {\em
  heaps}~\cite{viennot-heaps}, a geometric version of partially
commutative monoids %introduced in % 1969 by Cartier and Foata
\cite{cartier-foata}. 
 Intuitively, a heap is obtained by dropping vertically some
solid pieces, the one after the other. Thus, a piece lies either on
the ``floor'' 
(then it is said to be {\em minimal}), or covers, at
least partially, another piece.

Directed animals {\em are}, in essence, heaps. To see this,
replace every point of the animal 
by a  {\em dimer} (Fig.~\ref{bijection}). 
% Note that, in the heap thus obtained,  two dimers cannot be
% perfectly superposed. 
Note that if the animal has a unique source, the associated heap has
 a unique minimal piece. Such heaps are named {\em pyramids}.

\begin{figure}[hbt] \begin{center} 
\input{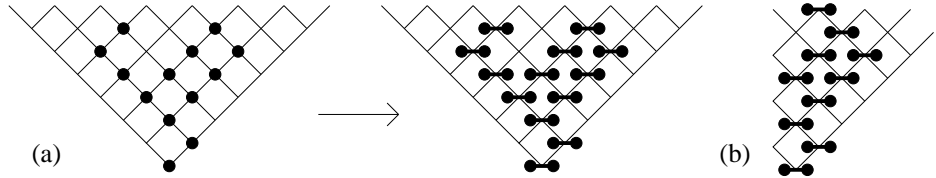}
\end{center} 
\vskip -5mm\caption{(a) A directed animal and the associated pyramid. 
(b) A half-pyramid.}
\label{bijection} 
\end{figure}

What makes heaps  interesting here is that there exists a {\em monoid
  structure} on the set of heaps: 
The product of two heaps is obtained by putting one heap above the
other and dropping its pieces.  
%Since there is a product, there are
%also factorizations, obtained by pushing upwards one or several
%pieces of the heap). Such  factorizations will provide 
This product is the key in our  context-free 
description of directed animals.

%\begin{figure}[hbt] \begin{center} 
%\input{demi.pstex_t}
%\caption{A half-pyramid of dimers.}
%\label{demi} \end{center} \end{figure}

Let us begin with the description of pyramids (one-source
animals). A pyramid is either a 
{\em half-pyramid} (Fig. \ref{bijection}(b)), 
or the product of a half-pyramid and  a pyramid (Fig.
\ref{pyramide}, top). Let $P(t)$ denote the GF of pyramids counted by
the number of dimers, and $H(t)$ denote the GF of
half-pyramids. Then $P(t)=H(t)(1+P(t))$.  
Now, a half-pyramid may be reduced to a single 
dimer. If it has several dimers, it is the product of a single dimer
and of one or two 
half-pyramids (Fig. \ref{pyramide}, bottom), which implies
$H(t)=t+tH(t)+tH^2(t)$.  

\begin{figure}[hbt] \begin{center} 
\input{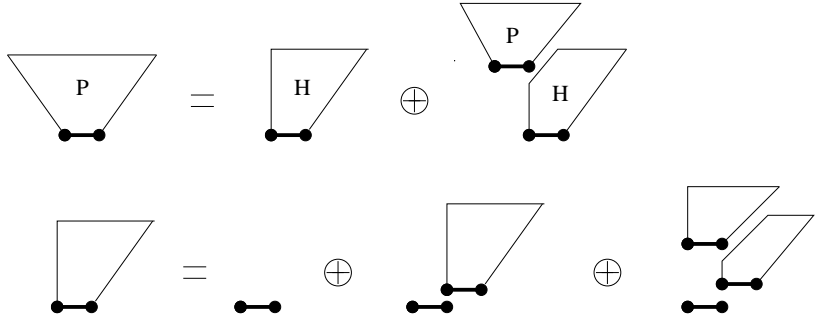}
%\vskip -5mm
\caption{Decomposition of pyramids (top) and half-pyramids (bottom).}
\label{pyramide} \end{center} \end{figure}

\noindent A trivial computation finally
provides the GF of directed (single-source) animals:
$$P(t)=\frac{1}{2}\left( \sqrt{\frac{1+t}{1-3t}} -1 \right) \quad \quad 
\left(\hbox{ while } 
H(t)= \frac{1-t-\sqrt{(1+t)(1-3t)}}{2t}\right).$$
The enumeration of compact-source directed animals is equivalent to the enumeration of
heaps having a compact {\em basis} (the minimal dimers are
adjacent). The generating function of heaps having a compact basis 
formed with $k$ dimers is $P(t)H(t)^{k-1}$ (Fig. \ref{basecompacte}), which implies that the generating function
of compact-source animals is $$\frac{P(t)}{1-H(t)}=\frac{t}{1-3t}.$$

\begin{figure}[hbt] \begin{center} 
\input{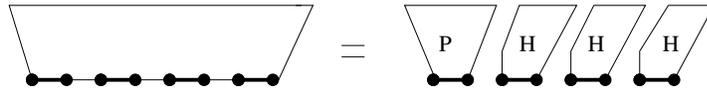}
%\vskip -5mm
\caption{Decomposition of heaps having a compact basis.}
\label{basecompacte} \end{center} \end{figure}

%*******************************************************************************
%%%%%%%%%%%%%%%%%%%%%%%%%%%%%%%%%%%%%%%%%%%%%%%%%%%%%%%%%%%%%%%%

\subsection{The world of planar maps}\label{sec:world}
We have seen in Section~\ref{sec:trees} that plane trees are the paradigm for
objects with an algebraic \gf. A more general family of plane
objects seems to be just as deeply associated with algebraic series,
but for reasons that are far more mysterious: \emm planar maps,.

A (planar) map is a proper embedding of a planar graph in the
sphere (Fig.~\ref{fig:tree-map}(b)). In order to avoid symmetries, all
the maps we consider 
are \emm rooted,: this means that one edge is distinguished and
oriented. Maps are only considered up to a continuous deformation of the
sphere. A map induces a 2-cell decomposition of the sphere: the
cells of dimension 0 [resp. 1, 2] are called \emm vertices, [resp. \emm
  edges,, \emm faces,]. Hence plane trees are maps with a single face.

The interest for the enumeration of planar maps dates back to the
early 60's, in connection with the 4-colour theorem. The first results
are due to
Tutte~\cite{tutte-triangulations,tutte-slicings,tutte-maps}. Ten to
fifteen years later, maps started to 
be investigated independently 
in theoretical physics, as a model for 2-dimensional
\emm quantum gravity,~\cite{BIPZ,BIZ}. However, neither the recursive approach
used by Tutte and his disciples, nor the physics approach based on
 matrix integrals were able to explain in a combinatorially
satisfactory way the following observations:

-- the \gfs\ of many classes of planar maps are
   algebraic,

-- the associated numbers are often irritatingly simple.

\noindent Let us illustrate this with three examples:

\noindent
{\bf 1. General maps.} The number of planar maps having $n$ edges is~\cite{Tutte-planar}:
\beq\label{map:expr}
g_n= \frac{2. 3^n}{(n+1)(n+2)}{{2n}\choose n}.
\eeq
The associated \gf \ $G\equiv G(t)=\sum_{n\ge 0 }g_n t^n$ satisfies:
\beq\label{map:alg}
%G(t)=\sum_{n\ge 0 }g_n t^n =
%\frac{ \left( 1-12\,t \right) ^{3/2}-1+18t}{54t^2}.
-1+16t+(1-18t)G+27t^2G^2=0.
\eeq

\noindent
{\bf 2. Loopless triangulations.} The number of loopless \emm  triangulations, (maps in which all faces have degree 3)  
%with no cut-point, 
having $2n+2$ faces is~\cite{mullin-triangulations}:
$$ 
t_n= { \frac{2^n }{(n+1)(2n+1)}
{{3n}\choose n}}.  $$
The associated \gf\ $T\equiv T(t)=\sum_n t_n t^n$ satisfies
$$
 1-27t+(-1+36t)T-8tT^2-16t^2T^3=0.
$$
%or, in a ``Lagrangian'' form:$$T=1+V-2V^2, \quad \hbox{ with }\quad
%V=t(1+2V)^3. $$

\noindent
{\bf 3. Three-connected triangulations.} 
The number of 3-connected triangulations
having $2n+2$ faces is~\cite{tutte-triangulations}:
$$ 
m_n= { \frac{2 }{(n+1)(3n+2)}
{{4n+1}\choose n}}.  $$
  The associated \gf\ $M\equiv M(t)=\sum_n t_n t^n$ satisfies
$$
-1+16t+(1-20t)M+(3t+8t^2)M^2+3t^2M^3+t^3M^4=0.
$$
These maps are in bijection with rooted \emm maximal, planar simple graphs
(graphs with no loop nor multiple edge that lose planarity as soon
as one adds an edge).

\smallskip
%In the early 80's, two alternative, more combinatorial proofs for
%general maps were published, suggesting that maps were in bijection with \emm differences, of context-free languages~\cite{}. But it is
At last, in  the past ten years, a general combinatorial picture has
emerged, suggesting that maps are, in essence, unrooted plane trees.
%(or conjugacy classes of trees). 
In what follows, we illustrate on the
example of general maps  the main three approaches that now exist, and
give references for further  developments of these methods.

\subsubsection{The recursive approach} We leave to the reader to
experience personally that maps do not have an obvious context-free
structure. Still, maps \emm do, have a simple recursive structure,
based on the deletion of the root-edge. However, in order to exploit this
structure,  one is forced to keep track of the degree of the
\emm root-face, (the face lying to the right of the root edge). The decomposition illustrated in Fig.~\ref{fig:map-dec} leads
in a few lines to the following equation:
\beq\label{eq:maps}
G(u,t)= 1+tu^2 G(u,t)^2 + tu\ \frac{uG(u,t)-G(1,t)}{u-1},
\eeq
where $G(u,t)$ counts planar maps by the number of edges ($t$) and the
degree of the root-face ($u$).

\begin{figure}[t]
\begin{center}
\input{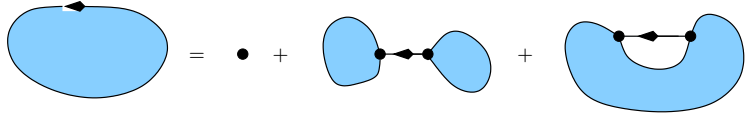}
\end{center}
\vskip -5mm\caption{Tutte's decomposition of rooted planar maps.}
\label{fig:map-dec}
\end{figure}

It can be checked that the above equation defines $G(u,t)$ uniquely as
a \fps\ in $t$ (with polynomial coefficients in $u$). However, it is
not clear on the equation why $G(1,t)$ (and hence $G(u,t)$) are
algebraic. In his original paper, Tutte first guessed the value of
$G_1(t):=G(1,t)$, and then proved the existence of a series $G(u,t)$ that fits
with $G_1(t)$ when $u=1$, and satisfies the above equation. Still, a
bit later, Brown came with a \emm method, for solving~\eqref{eq:maps}:
the so-called \emm quadratic
method,~\cite[Sec.~2.9]{brown-square,goulden-jackson}.
Write~\eqref{eq:maps} in the form
$(2aG(u,t)+b)^2=\delta,
$
where $a,b$ and $\delta$ are polynomials in $t, u$ and $G_1(t)$. That
is,
$$
\left( 2tu^2(u-1)G(u,t)+tu^2-u+1\right)^2= 
4t^2u^3(u-1)G_1+(1-u)^2-4tu^4+6tu^3+u^4t^2-2tu^2.
$$
It is not hard to see, even without knowing the value of $G(u,t)$,
that there exists a (unique) \fps\ in $t$, say $U\equiv U(t)$, that cancels
the left-hand side of this equation. That is,
$$
U=1+tU^2 +2tU^2(U-1)G(U,t).
$$
This implies that the series $U$ is a \emm double root, of the
polynomial $\delta$ that lies on the right-hand side. The
discriminant of this polynomial (in $u$) thus vanishes: this gives the
algebraic equation~\eqref{map:alg} satisfied by $G(1,t)$.

\smallskip
The enumeration of many other families of planar maps can also be
attacked by a recursive description based on the deletion of an edge
(or vertex, or face...). See for instance~\cite{mullin-triangulations} for
$2$-connected 
triangulations, or~\cite{bender-canfield} for maps with prescribed face
degrees.  (For maps with high connectivity, like 3-connected
triangulations, an additional \emm composition formula, is often
required~\cite{tutte-triangulations,banderier-schaeffer}.) The
resulting equations are usually of the form
\beq\label{cat}
P(F(u),F_1, \ldots , F_k,t,u)=0,
\eeq
where $F(u)\equiv F(t,u)$, the main \gf, is a series in $t$ with
polynomial coefficients in $u$, and $F_1, \ldots , F_k$ are series in
$t$ only, independent of $u$. Brown's \emm quadratic method, applies
as long as the degree in $F(u)$ is 2 (for the linear case, see the
\emm kernel method,
in~\cite{bousquet-petkovsek-1,hexacephale}). Recently, it was
understood how 
these equations could be solved in full generality~\cite{mbm-jehanne}.
%any degree, and for any values of $k$. 
Moreover, the solution of any (well-founded) equation of the above type
was shown to be algebraic. This provides two types
of enumerative results:

-- the proof that many map  \gfs\ are algebraic: it now suffices to
   exhibit an equation of the form~\eqref{cat}, or to explain why such
   an equation \emm exists,,

-- the solution of previously unsolved map problems (like the
   enumeration of hard-particle configurations on maps, which led
   to~\eqref{hard-part}, or that of triangulations with high vertex
   degrees~\cite{bernardi}). 

\subsubsection{Matrix integrals}
In the late 70's, it was understood by a group of physicists that
certain matrix integral techniques coming from quantum field
theory could be used to attack enumerative problems on maps~\cite{BIPZ,BIZ}.
This approach proved to be extremely efficient (even if it is usually
not fully rigorous). The first step is fairly automatized, and consists
in converting the description of maps into a certain integral. For
instance, the relevant integral for the enumeration of 4-valent maps
(maps in which all vertices have degree 4) is
$$
Z(t,N)=\frac {2^{N(N-1)/2}}{(2\pi)^{N^2/2}}\int dH \displaystyle
e^{\displaystyle {\rm tr}\left(
- {H^2}/ 2+  t H^4/N\right)},$$
where the integration space is that of hermitian matrices
$H$ of size $N$, equipped with the Lebesgue measure
$
dH=\prod dx_{kk} \prod_{k<\ell} dx_{k\ell}dy_{k\ell}
$
with $h_{k\ell}=x_{k\ell}+iy_{k\ell}$. As there is a classical
bijection between 4-valent maps with $n$ vertices and planar maps with
$n$ edges (Fig.~\ref{fig:tree-map}(c)), we are still dealing with our
reference problem: the 
enumeration of general planar maps. The  connection between the above
integral and maps is
$$
G(t)= t E'(t) \quad \hbox{ with } \quad E(t)=\lim _{N\rightarrow
  \infty} \frac 1 {N^2} \log Z(t,N).
$$
Other map problems lead to integrals involving  several hermitian
matrices~\cite{Ka86}. We refer to~\cite{Sacha} for a neat explanation
of the encoding 
of map problems by integrals, and to~\cite{difrancesco-survol,eynard} (and
references therein) 
for the evaluation of integrals. 

\subsubsection{Planar maps and trees}

\begin{figure}[b]
\begin{center}
\input{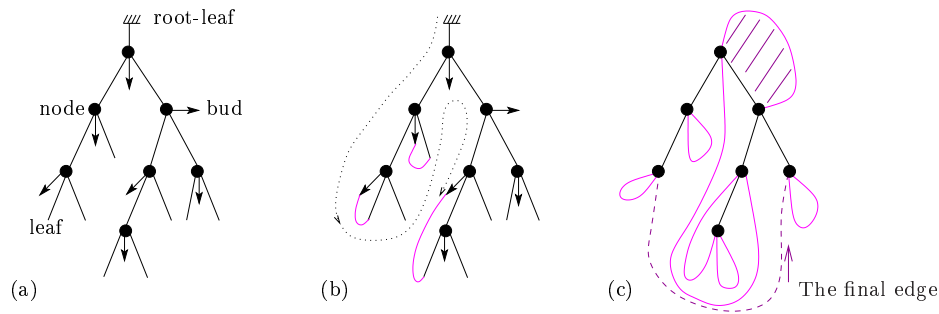}
\end{center}
\vskip -5mm\caption{(a). A budding tree. (b) An intermediate step in the matching
  procedure. (c) The resulting 4-valent map, with its marked face.}
\label{fig:blossoming}
\end{figure}
We finally come to a combinatorial explanation of the formula/equation
for $g_n$ and $G(t)$.
Take a plane binary tree with $n$ (inner) nodes, planted at a leaf,
and add to every inner node a new 
distinguished child, called a \emm bud,. At each node, we have three
choices for the position of the bud (Fig.~\ref{fig:blossoming}(a)). The
new tree, called \emm budding tree,, has now $n$ buds
and $n+2$ leaves. 
Now start from the root and walk around the tree in counterclockwise
order, paying attention to the sequence of buds and leaves you
meet. Each time a bud is immediately followed by a leaf in this
sequence, match them by forming a new edge (Fig.~\ref{fig:blossoming}(b)) and
then go on walking around the plane figure thus obtained. At the end,
exactly two leaves remain unmatched. Match them together and orient
this final edge in one of the two possible ways. Also, mark the face to
the left of the matching edge that ends at the root-leaf.

\begin{theorem}[\cite{Sch97}]
  The above correspondence is a bijection between pairs $(T,\eps)$
  where $T$ is a   budding
  tree having $n$ inner nodes and $\eps\in\{0,1\}$, and $4$-valent
  maps with $n$ vertices and a marked face.
\end{theorem}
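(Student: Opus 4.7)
\smallskip

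The plan is to establish the bijection by carefully describing the forward map $\Phi:(T,\varepsilon)\mapsto M$, constructing an explicit inverse $\Psi$, and verifying both directions. First I would formalize the matching procedure and prove it is well-defined: as one walks counterclockwise, one repeatedly pairs a bud with the leaf that immediately follows it in the current contour, collapses them into an edge, and continues the walk on the resulting figure. I would show by induction on $n$ (the number of inner nodes, equivalently the number of buds) that after exhausting all buds, exactly two leaves remain. A convenient way to make this precise is to label each leaf with $+1$ and each bud with $-1$, and observe that the contour word of a budding tree has sum $(n+2)-n=2$; a matching step deletes an adjacent pair $(-1)(+1)$, preserving the sum, and a standard cyclic-lemma style argument shows the process terminates with exactly two $+1$ symbols left. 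Each inner node, which originally has one parent-edge plus two child-edges plus one bud, gains degree $4$ once its bud is matched to a leaf (or is involved in the final matching), so all vertices of the resulting map $M$ are $4$-valent. A quick Euler's-formula check ($E=2n$ gives $F=n+2$) together with the count $2\cdot 3^n C_n$ of pairs $(T,\varepsilon)$ and the counting $(n+2)g_n=2\cdot 3^n C_n$ of $4$-valent maps with a marked face confirms the bijection is at least numerically possible.

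Second I would construct the inverse $\Psi$. Given a $4$-valent map $M$ with a marked face $f^*$ and, say, its root-edge oriented so that $f^*$ lies to its left, I would define a canonical spanning subtree by a greedy edge-deletion rule: walk along the contour of $f^*$ starting from the root and, each time the next edge has not yet been visited and its removal does not disconnect the map, delete it; otherwise keep it and continue. This is the usual opening procedure for blossoming-tree constructions. Each deleted edge leaves behind a half-edge on each of its two endpoints; the half-edge whose side lies against the already-explored region becomes a leaf, the other becomes a bud. The distinguished root-leaf is the tail of the root-edge, which is deleted at the last step; the orientation of the root-edge furnishes the bit $\varepsilon$. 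I would then prove that what remains is a plane binary tree in which every inner node carries exactly one bud (since every vertex had degree $4$, contributing one parent-edge, two child-edges and one bud-half-edge), hence a budding tree.

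Third I would verify that $\Phi\circ\Psi$ and $\Psi\circ\Phi$ are the identity. For $\Psi\circ\Phi=\mathrm{id}$: traversing the contour of the face marked by $\Phi$ reads off the buds and leaves in the same cyclic order as the original tree contour, and the greedy deletion of $\Psi$ undoes the matching steps of $\Phi$ in reverse. A useful invariant is that the matching of $\Phi$ is \emph{non-crossing}, so the edges created can be linearly ordered by the time of their creation, and $\Psi$ deletes them in the reverse of that order. For $\Phi\circ\Psi=\mathrm{id}$: the key is that a deleted edge never disconnects the map from itself in $\Psi$, which matches the fact that in $\Phi$ every matching edge is created \emph{inside} a face being swept, so no matching ever separates yet-unmatched buds from leaves; this symmetry forces the two procedures to produce each other.

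The main obstacle, in my view, is not the local degree count or the final counting of remaining leaves, but rather proving that $\Psi$ is well-defined and canonical: namely, that the greedy deletion along the marked face always leaves behind a tree (never disconnecting the map) and that the choice of face $f^*$ together with the root-edge orientation exactly accounts for the factors $(n+2)$ and $2$ that appear in the count. This requires the non-crossing character of the matching and a careful analysis of which half-edge of a deleted edge becomes a bud and which becomes a leaf, so that the reconstructed tree indeed satisfies the budding-tree axioms. Once this combinatorial invariant is secured, both composite maps reduce to the identity by unwinding the construction step by step.
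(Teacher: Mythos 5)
First, note that the paper itself does not prove this theorem: it only describes the closure construction and cites Schaeffer's work, so your attempt can only be measured against the standard proof of the blossoming bijection. Your overall architecture is the right one (and the same as Schaeffer's): prove the closure is well defined and produces a $4$-valent map with the stated parameters, then construct an explicit opening procedure as inverse and check both compositions. The forward half of your sketch is essentially fine: the $+1/-1$ contour-word argument showing that exactly two leaves survive, the local degree count at inner nodes, and the numerical check $2\cdot 3^n C_n=(n+2)g_n$ are all correct ingredients.

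The genuine gap is in the inverse, which is precisely where the content of the theorem lies, and your description of it is both imprecise and, as stated, not well defined. You anchor the opening walk at ``the root-edge oriented so that $f^*$ lies to its left'', but in the construction the marked face is the face to the left of the edge matching the \emph{root-leaf}, while the root edge of the map is the \emph{final} edge joining the two unmatched leaves (oriented by $\eps$); these are different edges in general, and the root edge need not even be incident to $f^*$, so your walk has no canonical starting point. Moreover, your greedy rule ``delete the next edge if its removal does not disconnect the map'' is not the correct characterization of the edges created by the closure: the standard opening cuts, iteratively, the edges incident twice to the current marked/outer face, and one must prove (i) that this process terminates with a spanning tree in which every vertex carries exactly one bud, (ii) that the bud/leaf assignment on each cut edge is forced, and (iii) that opening and closure are mutually inverse, typically via the non-crossing structure of the matching. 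You explicitly flag (i)--(iii) as ``the main obstacle'' but offer no argument for them; since everything else in the statement is routine counting, asserting these steps amounts to assuming the theorem. To complete the proof you would need to replace the disconnection test by the correct face-incidence cutting rule, separate cleanly the roles of the marked face (which accounts for the factor $n+2$) and of the oriented final edge (which accounts for the factor $2$ and the rooting), and then carry out the inductive verification that each cutting step undoes exactly one matching step of the closure.
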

The value of $\eps$ tells  how to orient the final matching edge.
Schaeffer first used this bijection to explain combinatorially
the formula~\eqref{map:expr}. Indeed, the number of   budding
  trees with $n$ inner nodes is clearly $3^n {{2n}\choose n} /(n+1)$
  (see~\eqref{eq:binary}), 
  while the number of 4-valent maps with $n$ vertices and a marked
  face is $(n+2)g_n$. Eq.~\eqref{map:expr} follows.

 Later, it was realized
  that this construction could also be used to explain the
  algebraicity of the series $G(t)$~\cite{BDG-planaires}.
Say that a budding tree is \emm balanced, if the root-leaf is not
matched by a bud. Take such a tree, match all buds, and orient the
final edge from the root-leaf to the other unmatched leaf. This gives
 a bijection between balanced budding trees and 4-valent maps. We
thus have to count balanced trees, or, equivalently, the unbalanced
ones. By re-rooting them at the bud that matches the root-leaf, one sees
that they are in bijection with a node attached to three budding trees. This
gives
$$
G(t)=B(t)-tB(t)^3, \quad \hbox{ where }\quad B(t)= 3t(1+B(t))^2
$$
counts budding trees by (inner) nodes.
The above construction involves taking a \emm difference, of 
$\ns$-algebraic series, which needs not be
%. The difference of algebraic languages is not always
%context-free, and the difference of $\ns$-algebraic series is not
%always 
$\ns$-algebraic. We actually conjecture that the series $G(t)$
is not $\ns$-algebraic (see Section~\ref{sec:N-alg}).

\smallskip
There is little doubt that the above construction (once described
in greater detail...) explains in  a very
satisfactory way both the simplicity of the formula giving $g_n$ and
the algebraicity of $G(t)$. Moreover, this  is not an \emm ad
hoc,, isolated magic trick: over the past ten years, it was realized
that this construction is one in a family of constructions  of the
same type, which
apply to numerous families of maps (Eulerian maps~\cite{Sch97}, maps with
prescribed vertex degrees~\cite{BDG-planaires},
constellations~\cite{mbm-schaeffer-constellations}, bipartite maps
with prescribed 
degrees~\cite{mbm-schaeffer-ising}, maps with higher connectivity~\cite{poulalhon-3-connected,fusy-3-connected}). Definitely, these
constructions reveal a lot about the combinatorial nature of
planar maps. 

To conclude this section, let us mention that a different
combinatorial construction for general planar maps, 
discovered %found by Cori and Vauquelin 
in the early
80's~\cite{cori-vauquelin}, has recently been
simplified~\cite{chassaing-schaeffer} and 
adapted to other families of
maps~\cite{dellungo,jacquard-schaeffer,bdg-labelled,bdg-mobiles}. It
is a bit less easy to handle 
than the one based on trees with buds, but it allows one to keep track
of the distances between some vertices of the map. This has led to
remarkable connections with a random probability distribution called
the \emm Integrated SuperBrownian
Excursion,~\cite{chassaing-schaeffer}. A third type of construction
has emerged even more recently~\cite{bernardi-kreweras} for 2-connected
triangulations, but no ones knows at the moment whether it will remain
isolated or is just the tip of another iceberg.
%%%%%%%%%%%%%%%%%%%%%%%%%%%%%%%%%%%%%%%%%%%%%%%%%%%%%%%%%%%%%%%%

\subsection{Algebraic series: some questions}
\label{section-strange}
We begin with three simple classes of objects that have an algebraic GF,
but for reasons that remain mysterious.
We then discuss a possible criterion (or necessary condition) for
$\ns$-algebraicity, and finally the algebraicity of certain
hypergeometric series.

\subsubsection{Kreweras' words and walks on the quarter plane}
\label{section-quarter}
\noindent
Let $\cL$ be the set of words $u$ on the alphabet $\{a,b,c\}$ such
  that for every prefix $v$ of $u$,  $|v|_a\ge |v|_b$ and 
  $|v|_a\ge |v|_c$. 
These words encode certain walks on the plane: these walks start
% and   end 
at  $(0,0)$, are made of three types of steps, $a=(1,1)$, $b=(-1,0)$ and
  $c=(0,-1)$, and never leave the first quadrant of the plane, defined
  by $x,y\ge 0$. The \emm pumping lemma,~\cite[Thm.~4.7]{hopcroft},
  applied to the word $a^nb^nc^n$, 
  shows that the language $\cL$ is not context-free. However, its 
  \gf\ \emm is, algebraic. More precisely, let us denote by
  $\ell_{i,j}(n)$ the number of words 
$u$ of $\cL$ of length $n$ such that $|u|_a-|u|_b=i$ and
$|u|_a-|u|_c=j$. They correspond to walks of length $n$ ending at position
  $(i,j)$. Then  the  associated three-variable \gf\ is 
$$
L(u,v;t)= \sum_{i,j, n} \ell_{i,j}(n)u^i v^j t^n= \frac{\left( 1/W-\bu\right) \sqrt{1-uW^2}
+\left( 1/W-\bv\right) \sqrt{1-vW^2}
}{uv-t(u +v+u^2v^2)}
- \frac 1{uvt}
$$
where $\bu=1/u$, $\bv = 1/v$ and  $W\equiv W(t)$ is the unique
power series in $t$ satisfying 
$
W=t(2+W^3).
$
Moreover, the number of  walks ending at $(i,0)$ is remarkably simple:
$$
\ell_{i,0}(3n+2i)=\frac{4 ^n (2i+1)}{(n+i+1)(2n+2i+1)} {{2i} \choose i}
{{3n+2i} \choose n} .
$$
The latter formula was proved in 1965 by Kreweras, in a fairly
complicated way~\cite{kreweras}. This rather mysterious result has
attracted the 
attention of several combinatorialists since its
publication~\cite{bousquet-kreweras,gessel-proba,niederhausen-ballot}.
The first combinatorial explanation of the above formula (in the case
$i=0$) has just been found by Bernardi~\cite{bernardi-kreweras}. 

%\begin{figure}[bth]\begin{center}\input{fig-kreweras.pstex_t}\end{center}
%\caption{Kreweras' walks in a quadrant.}
%\label{figure-kreweras}\end{figure}

\smallskip
Walks in the quarter plane do not always have
an algebraic GF: for instance, the number of \emm square lattice walks,
(with North, South, East and West steps) of size $2n$ that start and
end at $(0,0)$ and remain in the quarter plane is
$$
\frac{1}{(2n+1)(2n+4)} {{2n+2} \choose {n+1}}^2 \sim
\frac{4^{2n+1}}{\pi n^3},
$$
and this asymptotic behaviour prevents the corresponding \gf\ from
being algebraic (see~\eqref{asympt-alg}). The above formula is easily
proved by looking at the 
projections of the walk onto the horizontal and vertical axes.

\subsubsection{Walks on the slit plane}
\label{section-slit}
\noindent
Take  now \emm any,
finite set of steps $\cS \subset \zs\times \{-1,0,1\}$ (we say that these
steps have \emm small height variations,).  Let $s_{i,j}(n)$ be the number
of walks of length $n$ that
start from the origin,  consist of steps of $\cS$, never return to the
non-positive horizontal 
axis $\{(-k,0), k\ge 0\}$,  and  end at $(i,j)$.
 Let $S(u,v;t)$ be the associated \gf :
$$
S(u,v;t)= \sum_{i,j \in \zs, n \ge 0} s_{i,j}(n)u^i v^j t^n.
$$
Then this series is \emm always, algebraic, as well as the series 
$S_{i,j}(t):= \sum_n s_{i,j}(n) t^n$ that counts walks ending at
$(i,j)$~\cite{bousquet-slit,bousquet-schaeffer-slit}.
For instance, when $\cS$ is formed of the usual
square lattice steps (North, South, West and East), then
$$
{S(u,v;t)= \frac{\left( 1-2t(1+\bu)+\sqrt{1-4t}\right)^{1/2}
\left( 1+2t(1-\bu)+\sqrt{1+4t}\right)^{1/2}}{1-t(u+\bu +v+\bv)}}
$$
with $\bu =1/u$ and $\bv=1/v$.
Moreover, the number of walks  ending at certain
specific points is remarkably simple. For instance:
$$
s_{1,0}(2n+1)= C_{2n+1} , \quad
s_{0,1}(2n+1)= 4^n C_n, \quad
s_{-1,1}( 2n)= C_{2n},
$$
where $C_n= {{2n} \choose n}/(n+1)$ is the  $n$th Catalan
number, which counts binary trees~\eqref{eq:binary}, Dyck words, and numerous other
combinatorial objects~\cite[Ch.~6]{stanley-vol2}. The first of these
three identities has been proved combinatorially %by Barcucci et al.~
\cite{florence}. The others still defeat our understanding.

\subsubsection{\bf Embedded binary trees}
We consider again the complete binary trees met at the beginning of
Section~\ref{sec:trees}. Let us associate with each (inner) node 
of such a tree a label, equal to the 
difference between the number of right steps and the number of left
steps one takes when
 going from the root to the node. In other words, the label of the
node is its abscissa in the natural integer embedding of the tree
(Fig.~\ref{fig-JFmap}).

\begin{figure}[hbt]
\begin{center}
\input{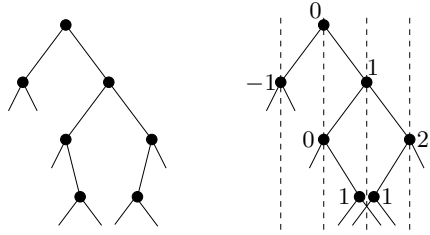}
\end{center}
\vskip -5mm\caption{The integer embedding of a binary tree.}
 \label{fig-JFmap}
\end{figure}

\noindent  Let $S_j\equiv S_j(t,u)$ be the \gf \ of binary trees counted by the
number of nodes (variable $t$) and the number of nodes at
abscissa $j$ (variable $u$).
Then for all $j \in \zs$, this series is
 algebraic of degree (at most) $8$
(while $S_j(t,1)$ is quadratic)~\cite{mbm-ise}.
Moreover,  for  $j \ge 0$,
$$
S_j= T\,\frac{(1+\mu Z^j)(1+\mu Z^{j+5})}{(1+\mu Z^{j+2})(1+\mu Z^{j+3})},
$$
where $$
T=1+tT^2, \quad \quad 
Z=t\, {\frac{ \left( 1+{Z}^{2} \right) ^{2
}} { \left( 1-Z+{Z}^{2} \right) }},
$$
and $\mu\equiv \mu(t,u)$ is the unique \fps\ in $t$
satisfying
$$
\mu= (u-1) \frac{Z(1+\mu Z)^2(1+\mu Z^2)(1+\mu Z^6)}
{(1+Z)^2(1+Z+Z^2)(1-Z)^3(1-\mu^2 Z^5)}.
$$

Why is that so? This algebraicity property holds as well for other
families of labelled trees~\cite{mbm-ise,BDG-trees}. 
From these series, one can derive certain limit results on the
 distribution of the number of nodes at abscissa $\lfloor \lambda n
 ^{1/4}\rfloor $ in a random tree with $n$ nodes~\cite{mbm-ise}. These
 results provide some  information  about the law of  the
 \emm integrated  super-Brownian excursion,~\cite{mbm-ise,mbm-janson}.

\subsubsection{$\ns$-algebraicity}\label{sec:N-alg}
$\ns$-algebraic series have been defined in Section~\ref{sec:trees} in terms of
positive proper algebraic systems. The author has been unable to find
in the literature a criterion, or even a necessary condition for an
algebraic series with coefficients in $\ns$ to be $\ns$-algebraic. Nor even  an algebraic series with coefficients in
$\ns$ that would \emm not, be $\ns$-algebraic (together with a proof of this
statement...). 

A partial answer could be provided by the study of the possible
asymptotic behaviour of coefficients of $\ns$-algebraic series. It is
very likely that not all behaviours of the form~\eqref{asympt-alg} are
possible. An important result in this direction states that, if a
proper positive system~\eqref{system} is \emm strongly connected,, the $n$th
coefficient of, say, $A_1$ follows the general
pattern~\eqref{asympt-alg}, but with $d=-3/2$~\cite[Thm. VII.7]{fla-sedg2}.
 The system is \emm strongly connected, if, roughly
speaking, the expression of every series $A_i$ involves (possibly
after a few iterations of the system) every other series $A_j$. For
instance, the system defining the walks ending at 0 in
Section~\ref{sec:line} reads 
$$
M_0= t^2(1+M_0)^2\quad \hbox{ and }\quad W_0=M_0(2+W_0).
$$
This system is not strongly connected, as $M_0$ does not involve
$W_0$. Accordingly, the number of $2n$-step walks returning to $0$
is ${{2n}\choose n} \sim \kappa 4^n n^{-1/2}$.

If one can rule out the possibility that $d=-5/2$ for $\ns$-algebraic
series, then this will prove that most map \gfs\ are not
$\ns$-algebraic (see the examples in Section~\ref{sec:world}).

\subsubsection{Some algebraic hypergeometric series}
Consider the following  series:
$$
F(t)=\sum_{n\ge 0 }f_nt^n= \sum_{n\ge 0 }\frac{\prod_{i=1}^d (a_in)!}{\prod_{j=1}^e (b_jn)!} t^n,
$$
where $a_1, \ldots , a_d, b_1, \ldots , b_e$ are positive
integers. This series is algebraic for some values of the $a_i$'s and
$b_j$'s, as shown by the case
$$
\sum_{n\ge 0 } \frac{(2n)!}{n!^2} t^n=\frac 1{\sqrt{1-4t}}.
$$
Can we describe all algebraic cases? Well, one can easily obtain some
necessary conditions on the sequences 
$a$ and $b$ by looking at the asymptotics of $f_n$. First, an
algebraic power series has a finite, positive radius 
of convergence (unless it is a polynomial). This, combined with
Stirling's formula, gives at once
\beq\label{necessary}
a_1+\cdots + a_d=b_1+\cdots +b_e.
\eeq
Moreover, by looking at the dominant term in the asymptotic behaviour
of $f_n$, and comparing with~\eqref{asympt-alg}, one obtains that
either $e=d$, or $e=d+1$. The case $d=e$ only gives the trivial solution $F(t)=1/(1-t)$, and the complete answer to this problem is as
follows~\cite{beukers,villegas}:
\begin{theorem}
  Assume~\eqref{necessary} holds, and $F(t) \not = 1/(1-t)$. The series $F(t) $ is algebraic if and only if
  $f_n \in \ns$ for all $n$ and $e=d+1$.
  \end{theorem}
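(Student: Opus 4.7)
The plan is to use the asymptotic reduction indicated in the excerpt: the case $e=d$ collapses to $F=1/(1-t)$ (excluded by hypothesis) and the fit with~\eqref{asympt-alg} rules out $e\geq d+2$, so I may assume $e=d+1$ from the start. With this assumption, a direct computation gives
$$
\frac{f_{n+1}}{f_n}=\frac{\prod_{i}\prod_{k=1}^{a_i}(a_in+k)}{\prod_{j}\prod_{k=1}^{b_j}(b_jn+k)}=\rho\,\frac{\prod_{r=1}^{s}(n+\alpha_r)}{\prod_{r=1}^{s}(n+\beta_r)},
$$
where $s=\sum a_i=\sum b_j$, $\rho=\prod a_i^{a_i}/\prod b_j^{b_j}$, and the numerator and denominator parameters are the multisets $\{k/a_i\}$ and $\{k/b_j\}$ for $1\le k\le a_i$ (resp. $b_j$). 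Thus, after cancelling the common parameter $1$, $F(t/\rho)$ is a generalized hypergeometric series ${}_sF_{s-1}$, so $F$ satisfies a Fuchsian ODE of rank $s$ with singular points $0$, $1/\rho$ and $\infty$.

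The proof then combines two classical results. First, Beukers and Heckman classify when ${}_sF_{s-1}$ is algebraic: this happens precisely when the reduced multisets $\{\alpha_r\bmod 1\}$ and $\{\beta_r\bmod 1\}$ \emph{interlace} on the unit circle $\rs/\zs$. Second, Landau's criterion states that $f_n$ is a non-negative integer for every $n$ iff the step function
$$
\Delta(x):=\sum_{i=1}^{d}\lfloor a_ix\rfloor-\sum_{j=1}^{d+1}\lfloor b_jx\rfloor
$$
is non-negative for every $x\ge 0$. So the theorem reduces to showing that, under the balance condition $\sum a_i=\sum b_j$ and $e=d+1$, the Beukers--Heckman interlacing condition and Landau's positivity condition are equivalent.

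This last equivalence is the heart of the argument. Both conditions translate into combinatorial statements about the locations of the rationals $k/a_i$ and $k/b_j$ in $[0,1)$: Landau's condition says the cumulative jump function $\Delta$ stays non-negative, while Beukers--Heckman's says the jumps of $\sum\lfloor a_ix\rfloor$ and $\sum\lfloor b_jx\rfloor$ must alternate. The crucial point is the dimension jump $e-d=1$, which together with the balance $\sum a_i=\sum b_j$ forces $\Delta$ to take only the values $0$ and $1$ at generic $x$; in that regime, global non-negativity of $\Delta$ is exactly alternation of its jumps. I would carry out this step by tracking the jumps of $\Delta$ from left to right on $[0,1)$.

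The main obstacle is this combinatorial equivalence: it is easy to state but requires a careful bookkeeping of the contributions of each $a_i$ and $b_j$ to the jumps, and genuinely uses $e-d=1$. The converse implication, namely $F$ algebraic $\Rightarrow f_n\in\ns$, is softer: Eisenstein's theorem restricts the denominators of $f_n$ to finitely many primes, and a $p$-adic analysis of the rational function $f_{n+1}/f_n$ at each such prime then shows, under the balance condition, that no prime actually appears.
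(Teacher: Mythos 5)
The paper gives no proof of this theorem: it is quoted from the cited work of Beukers--Heckman and of Rodriguez-Villegas, and your sketch is essentially a reconstruction of that cited argument (write $f_{n+1}/f_n$ as a ratio of linear factors, apply the Beukers--Heckman interlacing criterion for ${}_sF_{s-1}$, apply Landau's criterion for integrality of factorial ratios, and show the two conditions match when $e=d+1$), so your overall route is the intended one and it works. Two steps are worded more loosely than they should be. First, ruling out $e\ge d+2$ via~\eqref{asympt-alg} is not only a matter of the exponent: Stirling gives $f_n\sim C\rho^n n^{(d-e)/2}$ with $C=(2\pi)^{(d-e)/2}\sqrt{\prod_i a_i/\prod_j b_j}$, and for odd $e-d\ge 3$ the exponent $-3/2,-5/2,\dots$ is perfectly admissible for an algebraic series; what excludes these cases is the constant, whose power of $\pi$ is incompatible with the algebraic $\kappa$ in~\eqref{asympt-alg} (the paper's own one-line dismissal glosses over the same point). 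Second, it is not true that $e-d=1$ and the balance condition alone force $\Delta\in\{0,1\}$; the missing ingredient is the reflection identity $\Delta(x)+\Delta(1-x)=e-d$ for generic $x$ (which uses~\eqref{necessary}), and it is this identity that upgrades Landau's non-negativity to the upper bound $\Delta\le 1$, and conversely. Once $\Delta$ is known to take only the values $0$ and $1$ generically, your plan of tracking jumps is exactly the Rodriguez-Villegas step: the up-jumps at the points $k/a_i$ and the down-jumps at the points $k/b_j$ must alternate modulo $1$, which is precisely Beukers--Heckman interlacing --- provided you first cancel \emph{all} coincident numerator/denominator parameters (not just one copy of $1$), since this cancellation is what guarantees the irreducibility needed to invoke Beukers--Heckman. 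Your Eisenstein argument for the direction ``algebraic $\Rightarrow$ $f_n\in\ns$'' is sound (via $v_p(f_n)=\sum_{\ell\ge1}\Delta(n/p^\ell)$, a negative value of $\Delta$ produces infinitely many primes in denominators); note that the same argument, combined with the reflection identity, also disposes of the case $e=d$ (it forces $\Delta\equiv 0$, hence $\{a_i\}=\{b_j\}$ and $F=1/(1-t)$), which is cleaner than appealing to asymptotics there.
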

Here are some algebraic instances:
$$
f_n= \frac{(6n)!(n)!}{(3n)!(2n)!^2}, \quad f_n= \frac{(10n)!(n)!}{(5n)!(4n)!(2n)!},
\quad f_n= \frac{(20n)!(n)!}{(10n)!(7n)!(4n)!}.
$$
The degree of these series is rather big: 12 [resp. 30]  for the first
[second] series above.
This theorem provides a collection of algebraic series with nice
integer coefficients: are these series $\ns$-algebraic? Do they count
some interesting objects?

\bigskip
\noindent {\bf Acknowledgements.} The parts of this survey that do not
deal exactly with the enumeration of combinatorial objects have
often be influenced by discussions with some of my colleagues, including
Fr\'ed\'erique Bassino, Henri Cohen, Philippe Flajolet, Fran\c cois Loeser,
G\'eraud S\'enizergues. Still, they should not be hold responsible for
any of the flaws of this paper...
\frenchspacing

\bibliographystyle{plain}
\bibliography{biblio.bib}

\end{document}